\newcommand{\sM}{\mathscr{M}}
\newcommand{\R}{\mathbb{R}}
\newcommand{\N}{\mathbb{N}}
\newcommand{\Z}{\mathbb{Z}}
\newcommand{\union}{\cup}
\newcommand\dist{d}
\newcommand\dihed{A_k}
\newcommand\prodd{\operatorname{prod}}
\newcommand\poss{\operatorname{Pos}}
\newcommand\negg{\operatorname{Neg}}
\newcommand\dualposs{\operatorname{\widetilde Pos}}
\newcommand\dualnegg{\operatorname{\widetilde Neg}}
\newcommand\dualZ{\tilde Z}
\newcommand\closure{\operatorname{cl}}
\newcommand\dists{\mathcal{D}}
\newcommand\dualdists{\mathcal{\tilde D}}
\newcommand\dualdist{\tilde d}
\newcommand\gens{S}
\newcommand\dualgens{\tilde S}
\newcommand\dualcompactification{\tilde\sM}
\newcommand\dualfinites{\tilde Z_0}
\newcommand\plusclass{+\hat\infty}
\newcommand\minusclass{-\hat\infty}
\newcommand\grote{\mathcal{G}}
\newcommand\conda{i}
\newcommand\condb{ii}
\newcommand\condc{iii}
\newcommand\condd{iv}
\newtheorem*{theorem*}{Theorem}
\newtheorem*{proposition*}{Proposition}
\newtheorem{prop}{Proposition}[section]
\newtheorem{proposition}[prop]{Proposition}
\newtheorem{lemma}[prop]{Lemma}
\newtheorem{theorem}[prop]{Theorem}
\theoremstyle{definition}
\begin{document}

\title{Busemann points of Artin groups of dihedral type}
\date{\today}
\author{Cormac Walsh}
\address{INRIA, Domaine de Voluceau,
78153 Le Chesnay C\'edex, France}
\email{cormac.walsh@inria.fr}

\subjclass[2000]{Primary 20F36; 20F65}

\keywords{Artin groups, braid groups, Garside groups,
geodesics, growth series,
horoball, max-plus algebra, metric boundary,
Busemann function}

\begin{abstract}
We study the horofunction boundary of an Artin group of dihedral type
with its word metric coming from either the usual Artin generators
or the dual generators.
In both cases, we determine the horoboundary and say
which points are Busemann points, that is the limits of geodesic rays.
In the case of the dual generators, it turns out
that all boundary points are Busemann points, but this
is not true for the Artin generators.
We also characterise the geodesics
with respect to the dual generators, which allows us
to calculate the associated geodesic growth series.
\end{abstract}

\maketitle

\section{Introduction}

Consider the following metric space boundary,
defined first by Gromov~\cite{gromov:hyperbolicmanifolds}.
One assigns to each point $z$ in the metric space $(X,d)$ the function
$\psi_z:X\to \R$,
\begin{equation*}
\psi_z(x) := d(x,z)-d(b,z),
\end{equation*}
where $b$ is some basepoint.
If $X$ is proper and complete, then the map
$\psi:X\to C(X),\, z\mapsto \psi_z$ defines an embedding of $X$ into $C(X)$,
the space of continuous real-valued functions on $X$ endowed
with the topology of uniform convergence on compacts.
The horofunction boundary is defined to be
$X(\infty):=\closure\{\psi_z\mid z\in X\}\backslash\{\psi_z\mid z\in X\}$,
and its elements are called horofunctions.

This boundary is not the same as the better known Gromov boundary of
a $\delta$-hyperbolic space. For these spaces, it has been
shown~\cite{coornaert_papadopoulos_horofunctions,winweb_hyperbolic,
storm_barycenter} that the horoboundary is finer than the Gromov boundary
in the sense that there exists an equivariant continuous surjection from
the former to the latter.

An interesting class of metric spaces are the Cayley graphs of finitely
generated groups with their word metric.
Here one may hope to have a combinatorial description of the horoboundary.
Rieffel~\cite{rieffel_group} has investigated the horoboundary in this
setting. A length function on a discrete group naturally
gives rise to a metric on the state space of the reduced group
C*-algebra~\cite{connes}
and, in the case of $\Z^d$ with a word metric coming from a finite set
of generators, Rieffel used the horoboundary to determine
certain properties of this metric, in particular, whether it is
compatible with the weak* topology on the state space.

This motivates the study of the horoboundary of other finitely generated groups.
In this paper, we investigate the horofunction boundary of
the Artin groups of dihedral type.
Let $\prodd(s,t;n):= ststs\cdots$, with $n$ factors in the product.
The Artin groups of dihedral type have the following presentation:
\begin{align*}
\dihed = \langle a,b \mid \prodd(a,b;k)=\prodd(b,a;k) \rangle,
\qquad\text{with $k\ge3$.}
\end{align*}
Observe that $A_3$ is the braid group on three strands.
The generators traditionally considered are the Artin generators
$\gens:=\{a,b,a^{-1},b^{-1}\}$.

In what follows, we will have need of the Garside normal form
for elements of $\dihed$.
The element $\Delta:=\prodd(a,b;k)=\prodd(b,a;k)$ is called the
Garside element.
Let
\begin{align*}
M^+:= \{a,b,ab,ba,\dots,\prodd(a,b;k-1),\prodd(b,a;k-1)\}.
\end{align*}
It can be shown~\cite{word_processing} that $w\in \dihed$ can be written
\begin{align*}
w = w_1\cdots w_n \Delta^r
\end{align*}
for some $r\in\Z$ and $w_1,\dots,w_n\in M^+$.
This decomposition is unique if $n$ is required to be minimal.
We call it the right normal form of $w$.
The factors $w_1,\dots,w_n$ are called the canonical factors of $w$.

One can also write $w$ in left normal form:
$w=\Delta^r w'_1\cdots w'_n$, with $r\in\Z$ and $w'_1,\dots,w'_n\in M^+$.

To calculate the horoboundary, we will need a formula for the word length
metric. An algorithm was given in~\cite{berger_braids} for
finding a geodesic word representing any given element of $A_3$.
In~\cite{sabalka_geodesics}, there is a criterion for when a word
is a geodesic in $A_3$.
Both these results were generalised in~\cite{mairesse_matheus_growth} to
arbitrary $k\ge3$.
It was shown that a freely reduced word $u$
is a geodesic with respect to the Artin generators if and only if
\begin{align}
\label{geo}
\poss(u)+\negg(u)\le k.
\end{align}
Here $\poss(u)$ is the length of the longest possible element of
$M^+\union\{\Delta\}$ obtainable by multiplying together consecutive letters
of $u$.
The length of an element $\prodd(a,b;n)$ or $\prodd(b,a;n)$
of $M^+\union\{\Delta\}$ is defined to be $n$.
Likewise, $\negg(u)$ is the length of the longest possible element of
$M^-\union\{\Delta^{-1}\}$ obtainable in the same way,
where $M^-:= (M^+)^{-1}$.

We use the algorithm in~\cite{mairesse_matheus_growth} to find a simple
formula for the word length metric.
\begin{proposition*}
Let $x=\Delta^{r} z_1\cdots z_m$ be an element of $\dihed$ written in left
normal form. Let $(p_0,\dots,p_{k-1})\in\N^{k}$ be such that
$p_0:=r$ and, for each $i\in\{1,\dots,k-1\}$, $p_i-p_{i-1}= m_{k-i}$,
where $m_i$ is the number of canonical factors of $x$ of length $i$.
Then the distance from the identity $e$ to $x$ in the Artin-generator
word-length metric is
\begin{align*}
\dist(e,x)=\sum_{i=0}^{k-1} |p_i|.
\end{align*}
\end{proposition*}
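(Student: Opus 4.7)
The plan is to exhibit, for each $x\in\dihed$ given in left normal form $x=\Delta^r z_1\cdots z_m$, an explicit word $u$ in the Artin generators representing $x$ with $|u|=\sum_{i=0}^{k-1}|p_i|$, and then verify that $u$ satisfies the geodesic criterion~\eqref{geo}; since that criterion characterises geodesics, this yields $\dist(e,x)=|u|$.

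The algebraic key is Garside complementarity: each $z\in M^+$ of length $\ell$ has a complement $\tilde z\in M^+$ of length $k-\ell$ with $z\tilde z=\Delta$, equivalently $\Delta^{-1}z=\tilde z^{-1}$. Moreover, conjugation by $\Delta$ is a length-preserving involution on $\{a,b\}$ (trivial when $k$ is even, the swap when $k$ is odd), so powers of $\Delta$ commute past canonical factors without affecting word length. I build $u$ by absorbing available $\Delta^{-1}$'s into canonical factors, prioritising the longest. If $r\ge 0$, no absorption happens and $u$ is the naive positive expansion, of length $kr+\sum_j\ell_j=\sum_i|p_i|$ since every $p_i\ge 0$. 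If $r<0$, exactly $\min(|r|,m)$ absorptions take place, each replacing a naive $\Delta^{-1}z_j$ of length $k+\ell_j$ by a word of length $k-\ell_j$. Summing contributions from absorbed pairs, unabsorbed factors, and any residual $\Delta^{\pm1}$'s, and exploiting $p_i-p_{i-1}=m_{k-i}$ (so that processing factors of length $k-i$ corresponds to traversing layer $i$, after which the net signed count of carried $\Delta$-powers is exactly $p_i$), the total telescopes to $\sum|p_i|$. To verify $\poss(u)+\negg(u)\le k$, observe that $u$ decomposes into sign-homogeneous blocks (a residual $\Delta^{\pm}$-power, then the absorbed complements $\tilde z_j^{-1}$, then the unabsorbed $z_j$'s); within any such block no contiguous subword exceeds $\Delta$ in length, and positive and negative subwords do not contribute jointly to $\poss$ or $\negg$, so the criterion holds.

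The main obstacle is the length accounting in the mixed regime where some $p_i$ are negative and others non-negative: at the transitional layer $i_0$ with $p_{i_0-1}<0\le p_{i_0}$, the $m_{k-i_0}$ canonical factors of length $k-i_0$ split into $|p_{i_0-1}|$ absorbed and $p_{i_0}$ unabsorbed, and showing that these contribute exactly $|p_{i_0-1}|+|p_{i_0}|$ to the total requires a careful case split, tracking the partial consumption of $\Delta^{-1}$'s at the sign change. A secondary subtlety is confirming the geodesic criterion at the junction between the absorbed negative block and the unabsorbed positive block, to rule out a hidden long subword of $M^+\cup M^-\cup\{\Delta^{\pm 1}\}$ forming across that sign boundary.
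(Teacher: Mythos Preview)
Your construction is exactly the one the paper uses: apply the Mairesse--Math\'eus absorption procedure (shift each $\Delta^{-1}$ onto a canonical factor of maximal length) and compute the length of the resulting word in the three regimes $r\ge 0$, $-r\ge m$, and $0<-r<m$, obtaining $\sum_i|p_i|$ in each case. The only difference is that the paper invokes~\cite{mairesse_matheus_growth} for the fact that the output word is geodesic, whereas you propose to re-verify criterion~\eqref{geo} directly; one caution here is that after absorption the negative and positive pieces are generally interspersed in the order inherited from $z_1,\dots,z_m$, not arranged in two contiguous sign-homogeneous blocks as you describe, but since every unabsorbed factor has length $\le j$ and every absorbed complement has length $\le k-j$ (with $j$ the threshold length at which absorption stops), the bound $\poss(u)+\negg(u)\le j+(k-j)=k$ still follows segment by segment.
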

Since $d$ is invariant under left multiplication, that is,
$d(y,x)=d(e,y^{-1}x)$, we can use this formula to calculate the distance
between any pair of elements $y$ and $x$ of $\dihed$.
With this knowledge we can find the following description of the horofunction
compactification.
\newcommand\emme{m}

Let $Z$ be the set of possibly infinite words of positive generators having
no product of consecutive letters equal to $\Delta$.
We can write each element $z$ of $Z$ as a concatenation of substrings in
such a way that the products of the letters in every substring equals
an element of $M^+$ and the combined product of letters in each consecutive
pair of substrings is not in $M^+$. Because $z$ does not contain $\Delta$,
this decomposition is unique. Let $\emme_i(z)$ denote the number of substrings
of length $i$. Note that if $z$ is an infinite word, then this number will
be infinite for some $i$.

Let $\Omega'$ denote the set of $(p,z)$ in
$(\Z\union\{-\infty,+\infty\})^k\times Z$
satisfying the following:
\begin{itemize}
\item
$p_i-p_{i-1}\ge \emme_{k-i}(z)$ for all $i\in\{1,\dots,k-1\}$ such that
$p_i$ and $p_{i-1}$ are not both $-\infty$ nor both $+\infty$;
\item
if $z$ is finite, then $p_i-p_{i-1} = \emme_{k-i}(z)$
for all $i\in\{1,\dots,k-1\}$ such that
$p_i$ and $p_{i-1}$ are not both $-\infty$ nor both $+\infty$.
\end{itemize}
We take the product topology on $\Omega'$.

We now define $\Omega$ to be the quotient topological space of $\Omega'$
where the elements of $(+\infty,\dots,+\infty)\times Z$ are considered
equivalent and so also are those in $(-\infty,\dots,-\infty)\times Z$.
We denote these two equivalence classes by $\plusclass$ and $\minusclass$,
respectively.

We let $\mathcal{M}$ denote the horofunction compactification of $\dihed$
with the Artin-generator word metric.
The basepoint is taken to be the identity.

\begin{theorem*}
The sets $\Omega$ and $\mathcal{M}$ are homeomorphic.
\end{theorem*}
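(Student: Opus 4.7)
The plan is to construct an explicit continuous bijection $\Phi:\Omega\to\mathcal{M}$ and then promote it to a homeomorphism via compactness. Each element $y\in\dihed$ is parametrized by a pair $(p^y,z^y)\in\Omega'$, where $p^y$ comes from its left normal form as in the proposition and $z^y\in Z$ is the positive word obtained by concatenating the canonical factors of $y$. Combining left-invariance $d(y,x)=d(e,y^{-1}x)$ with the proposition, the horofunction value $\psi_y(x)=d(e,y^{-1}x)-d(e,y)$ is an explicit function of $(p^y,z^y)$ and of the left-normal-form data of $x$. The first task is to analyse, for a \emph{fixed} $x\in\dihed$, the left normal form of $y^{-1}x$ in terms of these data: since $x$ is a fixed finite word, only a bounded prefix of $z^y$ and the $\Delta$-power $r^y$ interact with $x^{-1}$, so the rewriting affects only finitely many $p_i^y$. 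This should yield a closed-form expression of the shape
\begin{equation*}
\psi_y(x)=\sum_{i=0}^{k-1}\Bigl(|p_i^y-q_i(x,z^y)|-|p_i^y|\Bigr),
\end{equation*}
where each $q_i(x,z^y)\in\Z$ depends only on $x$ and a bounded prefix of $z^y$.

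With such a formula in hand, extension by continuity is natural: for $(p,z)\in\Omega'$ with possibly infinite $p_i$, define $\psi_{(p,z)}(x)$ by interpreting $|p_i-q_i|-|p_i|$ as $-q_i$ when $p_i=+\infty$ and as $q_i$ when $p_i=-\infty$. The two collapsing classes $\plusclass$ and $\minusclass$ correspond to horofunctions independent of $z$, because when all $p_i$ share the same infinite sign the $z$-dependence in $\sum q_i(x,z)$ reduces to $\pm d(e,x)$ independently of $z$; hence $\Phi$ descends to $\Omega$. I would then verify: (a) continuity of $\Phi$, immediate since only finitely many letters of $z$ enter each value $\psi_{(p,z)}(x)$ and $\mathcal{M}$ carries the topology of pointwise convergence on the discrete set $\dihed$; (b) injectivity, by testing on $x=\Delta^s$ for various $s$ to recover each $p_i$ up to infinity, and on canonical-factor prefixes to recover $z$ as long as at least one $p_i$ remains finite; (c) surjectivity, because any sequence $y_n$ with $d(e,y_n)\to\infty$ admits, via diagonal extraction, a subsequence along which each coordinate $p_i^{y_n}$ converges in $\Z\cup\{\pm\infty\}$ and each letter of $z^{y_n}$ stabilizes, giving a limit in $\Omega'$.

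Since $\Omega$ is compact (it is a quotient of a closed subset of the product of the compact space $(\Z\cup\{-\infty,+\infty\})^k$ with the compact space $Z$ of possibly-infinite positive words avoiding $\Delta$), and $\mathcal{M}$ is Hausdorff, a continuous bijection is automatically a homeomorphism. The main obstacle is the careful derivation and justification of the limit formula for $\psi_y(x)$: I must control how the prefix canonical factors of $y$ interact with $x^{-1}$, since they may cancel partially, absorb $\Delta$-powers, or force new $\Delta$-factors to appear, and each regime affects the $p_i^y$-coordinates differently. This combinatorial rewriting --- converting $y^{-1}x$ into left normal form and reading off the resulting $p$-data --- is the technical heart of the proof; once the constants $q_i(x,z)$ are identified explicitly and shown to depend only on a bounded prefix of $z$, the remaining injectivity, continuity, and surjectivity steps reduce to packaging pointwise limits and a Tychonoff-style compactness extraction.
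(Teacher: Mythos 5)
Your overall strategy is the same as the paper's: coordinatize group elements by their normal-form data $(p,z)$, use the distance formula and left-invariance to write $\psi_y(x)$ as $\sum_i\bigl(|p_i^y+\phi_i(x,z^y)|-|p_i^y|\bigr)$ with $\phi$ depending only on $x$ and a bounded prefix of $z^y$, extend to infinite data, and conclude by ``continuous bijection from a compact space to a Hausdorff space''. However, as written your argument has a genuine gap: you never show that $\Phi(\Omega)\subseteq\mathcal{M}$. Your step (c) proves only the reverse inclusion, namely that every horofunction (a pointwise limit of functions $\psi_{y_n}$ with $y_n\in\dihed$) is of the form $\psi_{p,z}$ for some $(p,z)\in\Omega$, via compactness extraction. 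It does not prove that every $(p,z)\in\Omega\setminus\Omega_0$ actually yields a horofunction. With only continuity, injectivity and $\mathcal{M}\subseteq\Phi(\Omega)$ you get $\Omega\cong\Phi(\Omega)\supseteq\mathcal{M}$, which does not give the theorem: a priori the constraints defining $\Omega'$ could be too weak, so that $\Omega$ is strictly larger than the boundary while still containing it. The missing ingredient is exactly the paper's Lemma~\ref{lem:closure}: $\Omega_0$ is dense in $\Omega$. This is proved by an explicit approximation, not by soft compactness --- given $(p,z)$, truncate $z$ after its first $n$ canonical factors and append further canonical factors so that the number of factors of length $i$ becomes $\min(p_{k-i}-p_{k-i-1},n)$, with $\Delta$-power $\max(\min(p_0,n),-n)$; the defining inequalities $p_i-p_{i-1}\ge\emme_{k-i}(z)$ are precisely what make this construction possible. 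Density plus continuity then gives $\Phi(\Omega)\subseteq\closure\Phi(\Omega_0)=\closure\dists=\mathcal{M}$ (using the analogue of Lemma~\ref{lem:bijection} to identify $\Phi(\Omega_0)$ with $\dists$), and your compactness argument finishes the proof. You should add this density step; it is where the definition of $\Omega$ is validated.

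Two smaller points. First, your justification that $\Phi$ descends to the two collapsed classes contains an inaccuracy: when all $p_i$ are $+\infty$ (resp.\ $-\infty$) the limiting function is $x\mapsto\pm\sum_{i=0}^{k-1}\pi_i(x^{-1})$, a signed sum of normal-form data which in general is not $\pm d(e,x)$ (e.g.\ it vanishes on $ab^{-1}$). What you actually need, and what requires an argument along the lines of Lemma~\ref{lem:sumphi} (tracking the effect of each generator and of $\Delta$ on the factor counts), is only that $\sum_i\phi_i(x,z)$ is independent of $z$. Second, your injectivity sketch (``recover $z$ as long as at least one $p_i$ remains finite'') glosses over the case where $p$ has entries $-\infty$ and $+\infty$ but none finite; the paper handles the case $p_0=-\infty$ by evaluating along $w_n\Delta^{-n}$ rather than along prefixes $w_n$ alone, and some such modification is needed there.
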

Let $Z_0$ be the set of elements of $Z$ that are finite words.
Let $\Omega_0$ denote the set of $(p,z)$ in $\Z^k\times Z_0$
such that $p_i-p_{i-1}= \emme_{k-i}(z)$ for all $i\in\{1,\dots,k-1\}$.
We will show that the elements of $\Omega_0$ are exactly the elements of
$\Omega$ corresponding to functions of the form $d(\cdot,z)-d(e,z)$
in $\mathcal{M}$.

Of particular interest are those horofunctions that are the limits of
almost-geodesics; see~\cite{AGW-m} and~\cite{rieffel_group} for two related
definitions of this concept.
Rieffel calls the limits of such paths Busemann points.
In the present context, since the metric takes only integer values,
the Busemann points are exactly the limits of geodesics
(see~\cite{winweb_busemann}).
Develin~\cite{develin_cayley}, investigated the horoboundary of finitely
generated abelian groups with their word metrics and showed that all their
horofunctions are Busemann.
Webster and Winchester~\cite{winweb_busemann} gave a necessary and sufficient
condition for all horofunctions of a finitely generated group to be Busemann.

We prove the following characterisation of the Busemann points of $\dihed$.
\begin{theorem*}
A function in $\mathcal{M}$ is a Busemann point if and only if the
corresponding element $(p,z)$ of $\Omega$ is in $\Omega\backslash\Omega_0$
and satisfies the following:
$p_i-p_{i-1} = \emme_{k-i}(z)$ for every $i\in\{1,\dots,k-1\}$ such that
$p_i$ and $p_{i-1}$ are not both $-\infty$ nor both $+\infty$.
\end{theorem*}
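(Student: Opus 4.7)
The plan is to treat the two implications separately, using that elements of $\Omega_0$ correspond to interior points $\psi_z$ of the compactification (as the preceding theorem arranges) and hence are never Busemann; the real work is to characterise which $(p,z)\in\Omega\setminus\Omega_0$ are limits of geodesics, and the equality condition on the gaps $p_i-p_{i-1}$ (outside the $\pm\infty$ diagonal) is the expected pinning down of that characterisation.

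For the \emph{if} direction, suppose $(p,z)\in\Omega\setminus\Omega_0$ satisfies the stated equality condition. I would build an explicit sequence $x_n\in\dihed$ lying on a single geodesic ray as follows. Truncate $z$ to longer and longer prefixes $z^{(n)}\in Z_0$, and choose finite coordinates $p^{(n)}\in\Z^k$ that agree with $p$ on the finite coordinates, that send the $+\infty$ coordinates to suitably increasing values and the $-\infty$ coordinates to suitably decreasing values, and that satisfy the equality $p_i^{(n)}-p_{i-1}^{(n)}=\emme_{k-i}(z^{(n)})$ for all $i$, so that $(p^{(n)},z^{(n)})\in\Omega_0$. This data reconstructs a unique $x_n$ via left Garside normal form, and by the displayed Proposition, $\dist(e,x_n)=\sum_i|p_i^{(n)}|$. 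By choosing the truncations and the extrapolations coherently I would ensure that, for $m\le n$, the normal form of $x_m^{-1}x_n$ concatenates with that of $x_m$ to yield $x_n$ without any cancellation, giving $\dist(e,x_n)=\dist(e,x_m)+\dist(x_m,x_n)$; thus $(x_n)$ is an almost-geodesic in Rieffel's sense, and its limit is the horofunction associated to $(p,z)$. By the cited Webster--Winchester result, this makes $(p,z)$ a Busemann point.

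For the \emph{only if} direction, suppose $(p,z)\in\Omega\setminus\Omega_0$ is a Busemann point, so it is the limit of a geodesic sequence $(x_n)$ starting at $e$. Writing each $x_n$ in left normal form gives data $(p^{(n)},z^{(n)})\in\Omega_0$ with $\dist(e,x_n)=\sum_i|p_i^{(n)}|$ and $(p^{(n)},z^{(n)})\to(p,z)$ in the product topology on $\Omega$. Geodesicity plus left-invariance yields $\dist(e,x_n)=\dist(e,x_m)+\dist(e,x_m^{-1}x_n)$ for $m\le n$, so applying the distance formula to $x_m^{-1}x_n$ and comparing term by term forces the canonical factor counts and exponent of $\Delta$ to add tightly. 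If instead $p_i-p_{i-1}>\emme_{k-i}(z)$ strictly at some index where $p_i,p_{i-1}$ are not both $\pm\infty$, then at least one is finite and the excess $\delta:=(p_i-p_{i-1})-\emme_{k-i}(z)>0$ persists for all $n$ sufficiently large. I would then show that this surplus forces $\poss(u)+\negg(u)>k$ for the geodesic word $u$ representing $x_m^{-1}x_n$ once $n-m$ is large, contradicting the Mairesse--Matheus geodesic criterion~\eqref{geo} and hence the assumption that $(x_n)$ is a geodesic sequence.

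The hardest step, I expect, will be the combinatorial bookkeeping in the \emph{only if} direction: tracking how the left normal form of $x_m^{-1}x_n$ sits across the interface between the $\Delta$-power part and the $M^+$-part of $x_m$ and $x_n$, and converting an excess in the gap $p_i-p_{i-1}$ into a quantitative violation of the inequality $\poss(u)+\negg(u)\le k$. The $\pm\infty$ exception in the theorem is precisely what makes the argument consistent: when two consecutive coordinates both diverge to the same infinity, the corresponding differences $p_i^{(n)}-p_{i-1}^{(n)}$ can outgrow $\emme_{k-i}(z^{(n)})$ without obstruction, because the slack is absorbed by the freedom in choosing how fast the diverging coordinates grow, and this freedom is exactly what lets us construct geodesics along such rays in the \emph{if} direction.
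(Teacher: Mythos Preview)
Your plan has the right shape, but both directions diverge from the paper's argument, and in the ``only if'' direction there is a real gap.

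\textbf{The ``if'' direction.} The paper does \emph{not} construct an explicit geodesic. Instead it invokes the detour-cost criterion from~\cite{AGW-m}: a horofunction $\xi$ is Busemann iff $H(\xi,\xi)=0$. One takes $w^n$ to be the product of the first $n$ canonical factors of $z$, anchors a single coordinate $q^n_j$ of the approximating vector (where $j$ is the index of the first non-negative or last non-positive entry of $p$), lets the remaining $q^n_i$ be determined by $q^n_i-q^n_{i-1}=m_{k-i}(w^n)$, and then computes $\phi_i(x^n,z)=-q^n_i$ directly. The inequality $H(\xi,\xi)\le\liminf_n\sum_i(|q^n_i|+|p_i-q^n_i|-|p_i|)=0$ drops out immediately. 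Your plan to build a geodesic by hand could work, but note that ``agree with $p$ on the finite coordinates'' is generally incompatible with $p^{(n)}_i-p^{(n)}_{i-1}=m_{k-i}(z^{(n)})$ for small $n$; you must anchor one coordinate and let the rest float, exactly as the paper does. Verifying geodesicity of your sequence directly is also more work than the detour-cost bound.

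\textbf{The ``only if'' direction.} Here the paper's method is entirely different from yours, and your proposed route has a gap you do not close. The paper walks along the geodesic word $y$ letter by letter and classifies the effect of right-multiplication by each $y_n\in S$ on $\pi(x_{n-1})$ into four cases, showing that exactly one coordinate of $\pi$ changes by $\pm1$ at each step. Geodesicity then forces each $\pi_i$ to be monotone once it leaves $0$, and this monotonicity implies that canonical factors are eventually only created and never destroyed, so $z^{(n)}$ grows monotonically to $z$ and $m_{k-i}(z^{(n)})\to m_{k-i}(z)$. The equality $p_i-p_{i-1}=m_{k-i}(z)$ follows.

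Your alternative is to suppose an excess $p_i-p_{i-1}>m_{k-i}(z)$ and derive $\poss(u)+\negg(u)>k$ for the geodesic subword $u$ from $x_m$ to $x_n$. But any subword of a geodesic word \emph{is} geodesic, hence satisfies $\poss(u)+\negg(u)\le k$ automatically; so what you actually need is a mechanism that converts the excess in the gap into a concrete alternating subword of length $>k$ inside $u$, and you give no indication of how to do this. The link between canonical-factor counts in normal form and $\poss/\negg$ of an arbitrary geodesic representative is exactly the hard combinatorics, and the paper sidesteps it entirely with the four-case step analysis. As written, your ``only if'' argument is a statement of intent rather than a proof.
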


The group $A_k$ also has a dual presentation:
\begin{align*}
A_k = \langle \sigma_1,\dots,\sigma_k
 \mid \sigma_1\sigma_2 = \sigma_2\sigma_3 = \cdots = \sigma_k\sigma_1 \rangle,
\qquad\text{with $k\ge3$.}
\end{align*}
The set of dual generators is
$\dualgens := \{\sigma_1,\dots,\sigma_k,\sigma_1^{-1},\dots,\sigma_k^{-1}\}$.
These are related to the Artin generators in the following way: $\sigma_1=a$,
$\sigma_2=b$, and
\begin{align*}
\sigma_j = \begin{cases}
\prodd(b^{-1},a^{-1};j-2) \prodd(a,b;j-1), & \text{if $j$ is odd,} \\
\prodd(b^{-1},a^{-1};j-2) \prodd(b,a;j-1), & \text{if $j$ is even,}
\end{cases}
\end{align*}
for $j\in\{3,\dots,k\}$.
The existence of a dual presentation holds more generally for all Artin groups
of finite type~\cite{bessis_dual}.

There are also Garside normal forms related to the dual presentation.
Here the Garside element is
$\delta:= \sigma_1 \sigma_2 = \cdots = \sigma_{k} \sigma_1$.

Again, we find a formula for the word length metric.
\begin{proposition*}
Let $w=\delta^{r} w_1 w_2\cdots w_s$ be written in left normal
form. Then the distance between the identity and $w$ with respect to the
dual generators is given by $\dualdist(e,w)=|r|+|r+s|$.
\end{proposition*}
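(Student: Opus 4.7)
The plan is to bound $\dualdist(e,w)$ both above and below by $|r|+|r+s|$.

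\emph{Upper bound.} The key algebraic identity, immediate from $\delta=\sigma_j\sigma_{j+1}$, is $\delta^{-1}\sigma_j=\sigma_{j+1}^{-1}$ (with indices modulo $k$), which lets one absorb a $\delta^{-1}$-factor into a following positive canonical factor at no extra cost. When $r\ge 0$, the normal form itself is a word of length $2r+s=|r|+|r+s|$. When $r<0$, I absorb as many of the $|r|$ copies of $\delta^{-1}$ as possible into the $w_i$'s. If $s\ge|r|$, all are absorbed and one obtains a word of length $s$; if $s<|r|$, exactly $s$ absorptions are performed, leaving $|r|-s$ free $\delta^{-1}$'s of length $2$ each, for total length $2(|r|-s)+s=-2r-s$. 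Both sub-cases give $|r|+|r+s|$. To move a $\delta^{-1}$ past an already-converted $\sigma_j^{-1}$ when required, one uses the conjugation rule $\delta^{-1}\sigma_j^{-1}=\sigma_{j+2}^{-1}\delta^{-1}$, which follows from $\sigma_j\delta=\delta\sigma_{j+2}$.

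\emph{Lower bound.} Let $u=g_1\cdots g_n$ be any word in $\dualgens$ representing $w$, and let $N_+$, $N_-$ count its positive and negative letters, so $n=N_++N_-$. I combine three constraints. First, the map $\sigma_j\mapsto 1$ extends to a homomorphism $A_k\to\Z$ (the dual relations being balanced), and evaluated on the normal form it gives $N_+-N_-=2r+s$. Second, the Garside $\sup$ function of the dual structure, satisfying $\sup(w)=r+s$ on our normal form, is sub-additive under group multiplication, with $\sup(\sigma_j)=1$ and $\sup(\sigma_j^{-1})=0$; hence $N_+\ge\max(r+s,0)$. Third, the symmetric argument using $\inf$ yields $N_-\ge\max(-r,0)$. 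Minimising $N_++N_-$ subject to these three constraints, in each of the three regimes ($r\ge 0$; $r\le 0\le r+s$; $r+s\le 0$), gives $n\ge|r|+|r+s|$.

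The main obstacle is the second and third constraints above, namely the sub- and super-additivity of $\sup$ and $\inf$ under group multiplication. These are standard in Garside theory, but in the present dihedral setting they can be verified directly using the relations $\delta=\sigma_j\sigma_{j+1}$, since $\delta$ is a two-sided multiple of every atom and $\delta$-conjugation permutes $\{\sigma_1,\ldots,\sigma_k\}$ cyclically.
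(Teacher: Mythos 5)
Your proposal is correct, but it takes a genuinely different route from the paper. The paper does not bound the distance from above and below at all: it verifies that the candidate function $l(w)=|r|+|r+s|$ satisfies the Fordham/Belk--Brown characterisation of word-length functions (Theorem~\ref{thm:lengthtechnique}), namely $l(e)=0$, $|l(wg)-l(w)|\le 1$, and the possibility of always decreasing $l$ by one; this is done by the same four-case analysis of how right multiplication by a single generator changes $(r,r+s)$ in the left normal form, an analysis the paper then reuses verbatim to characterise geodesics in Proposition~\ref{dualgeodesics}. You instead give the classical two-sided estimate: an explicit representative of length $|r|+|r+s|$ obtained by absorbing $\delta^{-1}$'s via $\delta^{-1}\sigma_j=\sigma_{j+1}^{-1}$ and pushing them past converted letters with $\delta^{-1}\sigma_j^{-1}=\sigma_{j+2}^{-1}\delta^{-1}$ (both identities are correct, and this mirrors the Mairesse--Math\'eus-style construction used for the Artin generators in Proposition~\ref{prop:artindist}), together with a lower bound combining the exponent-sum homomorphism $N_+-N_-=2r+s$ with $N_+\ge\max(r+s,0)$ and $N_-\ge\max(-r,0)$ coming from $\sup(uv)\le\sup(u)+\sup(v)$ and $\inf(uv)\ge\inf(u)+\inf(v)$; your three-regime minimisation is carried out correctly. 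What your route buys is an explicit geodesic representative for every element and a transparent ``why'' for the formula; what it costs is the reliance on the Garside-theoretic sub/super-additivity of $\sup$ and $\inf$, which you assert as standard but do not prove --- this is the one place a referee would ask for a line of argument (e.g.\ $u\preceq\delta^{t_1}$ and $v\preceq\delta^{t_2}$ imply $uv\preceq\delta^{t_1+t_2}$ after twisting the second factor by conjugation with $\delta^{t_2}$, and dually for $\inf$), whereas the paper's criterion needs only the local normal-form bookkeeping and no global divisibility inequalities.
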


Using this formula, we again determine the horoboundary. This time however,
there are no non-Busemann points.
\begin{theorem*}
In the horoboundary of $\dihed$ with the dual-generator word metric,
all horofunctions are Busemann points.
\end{theorem*}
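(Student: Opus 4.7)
My plan is to show that every horofunction is the limit of a genuine geodesic; since $\dualdist$ is integer-valued, by the criterion cited earlier \cite{winweb_busemann} this suffices to make each horofunction a Busemann point.

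The first key observation is that the formula $\dualdist(e, w) = |r| + |r+s|$ depends only on the two integer invariants $r$ and $r+s$ of the left normal form of $w$, not on the individual canonical factors. Using left-invariance $\dualdist(x, w) = \dualdist(e, x^{-1}w)$ together with the fact that left-multiplication by a fixed $x$ alters the left normal form of $w$ only in a bounded left portion (perturbing the $\delta$-exponent and the first few canonical factors), one sees that for $w$ sufficiently long, $\psi_w(x)$ is determined by $x$, by the pair $(r, r+s)$, and by the first finitely many canonical factors of $w$. Consequently, any convergent sequence $\psi_{w_n} \to \psi$ may be replaced by a subsequence along which $r_n \to r_\infty \in \Z \cup \{\pm\infty\}$, $r_n + s_n \to t_\infty \in \Z \cup \{\pm\infty\}$ (with $t_\infty \ge r_\infty$ since $s_n \ge 0$), and the canonical factor sequence stabilises coordinatewise to a possibly infinite word $W = w_1 w_2 \cdots$; the horofunction $\psi$ depends only on this triple $(r_\infty, t_\infty, W)$.

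Given any such limit triple, I would then exhibit a geodesic $(g_m)_{m \ge 0}$ with $g_0 = e$ and $\psi_{g_m} \to \psi$. After changing basis to coordinates $(r, r+s)$, the distance $|r|+|r+s|$ is simply the $\ell^1$-metric on $\Z^2$. Right-multiplication by a dual generator $\sigma_j$ shifts $(r, r+s)$ by one lattice unit: in the $r$-direction if $w_s \sigma_j = \delta$, and in the $(r+s)$-direction otherwise; $\sigma_j^{-1}$ is analogous. The geodesic is built by always selecting a step that moves $(r, r+s)$ in the direction of growing $|r|+|r+s|$ toward the target quadrant determined by the signs of $r_\infty$ and $t_\infty$, while drawing the generators from the letters of $W$ (or their inverses) so that the canonical factor sequence of $g_m$ converges to $W$. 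Each step increases $\dualdist(e, g_m)$ by exactly one, so $\dualdist(e, g_m) = m$; by the triangle inequality this forces $\dualdist(g_m, g_n) = |n - m|$, making $(g_m)$ a genuine geodesic, and by the first paragraph its horofunction has the prescribed triple $(r_\infty, t_\infty, W)$.

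The main obstacle is the case analysis in the geodesic construction: one must enumerate the admissible sign patterns of $(r_\infty, t_\infty)$ and verify, in each case, that a suitable dual generator $\sigma_j^{\pm 1}$ is always available to realise the next unit step of $(r, r+s)$ in the intended direction while also matching the required canonical factor. This reduces to a combinatorial check using the cyclic relations $\sigma_j \sigma_{j+1} = \delta$ and a careful description of how right-multiplication converts one canonical factor into another. Once this bookkeeping is laid out, the argument is complete.
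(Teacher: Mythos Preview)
Your outline is correct and would yield a valid proof, but it takes a different route from the paper's. The paper has already established, in the lemmas preceding this theorem, an explicit homeomorphism $\dualpsi:\dualpara\to\dualcompactification$; its proof of the present theorem then simply re-runs the argument of Theorem~\ref{thm:busemann}. In the dual setting the condition ``$p_i-p_{i-1}=\dualcownt_{k-i}(z)$ whenever $p_{i-1},p_i$ are not both $\pm\infty$'' is built into the very definition of $\dualpara'$, so only the backward implication of Theorem~\ref{thm:busemann} is needed. For $\pm\tilde\infty$ one writes down an explicit geodesic; for the remaining boundary points $(p,z)$ one takes $x^n:=\delta^{q^n_0}z_1\cdots z_n$ with $q^n$ chosen as in Theorem~\ref{thm:busemann}, and verifies $H(\xi,\xi)\le\liminf_n\big(\dualdist(e,x^n)+\dualpsi_{p,z}(x^n)\big)=0$ using the detour-cost criterion from~\cite{AGW-m}. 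No geodesic need be built in these cases.

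Your approach, by contrast, bypasses the detour-cost machinery and constructs a genuine geodesic for every horofunction. This is more elementary and self-contained (you effectively re-derive the parametrisation of $\dualcompactification$ rather than invoking it), but the price is the combinatorial case analysis you flag at the end. One point there deserving care: when you use right-multiplication by a negative generator (your case~(\condd)), all existing canonical factors get conjugated by $\delta$, so ``drawing the generators from the letters of $W$'' is not quite literal---you must pre-twist the letters by the appropriate power of $\tau$ so that after conjugation the canonical factors land on $z_1,z_2,\dots$. Concretely, for a boundary point with $p_0<0$ finite and $p_1=+\infty$, the path $g_m:=\delta^{-\min(m,|p_0|)}z_1\cdots z_m$ works: one checks $\dualdist(e,g_m)=m$ from the formula and that $g_{m-1}^{-1}g_m$ is a single generator, so the sequence is geodesic and visibly converges to $(p_0,+\infty,z)$. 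Once such verifications are carried out for each sign pattern of $(r_\infty,t_\infty)$, your argument is complete.
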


In general, one would expect the properties of the horofunction boundary
of a group with its word length metric to depend strongly on the generating
set. It would be interesting to know for which groups and for which properties
there is not this dependence.
As already mentioned, all boundary points of abelian groups are
Busemann no matter what the generating set~\cite{develin_cayley}.
On the other hand, the above results show that
for Artin groups of dihedral type
the existence of non-Busemann points depends on the generating set.

We use our formula to establish a criterion for a word to be a geodesic
with respect to the dual generators.
For every word $y$ with letters in $\dualgens$,
let $\dualposs(y)$ be the longest
element of $\{\sigma_1,\dots,\sigma_{k},\delta\}$ obtainable by
multiplying together consecutive letters of $y$.
The generators $\sigma_1,\dots,\sigma_{k}$ are considered to each have
length 1 whereas $\delta$ is considered to have length 2.
Similarly, $\dualnegg(y)$ is defined to be the longest element of
$\{\sigma^{-1}_1,\dots,\sigma^{-1}_{k},\delta^{-1}\}$ obtainable in the
same way.
\begin{proposition*}
Let $y$ be a freely reduced word of dual generators.
Then $y$ is a geodesic if and only if
$\dualposs(y)+\dualnegg(y) \le 2$.
\end{proposition*}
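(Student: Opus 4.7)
The plan is to leverage the distance formula $\dualdist(e,w)=|r|+|r+s|$ from the preceding proposition. For an element $w$ with left normal form $\delta^r w_1\cdots w_s$, set $\inf(w):=r$ and $\sup(w):=r+s$. The single-letter values are $\inf(\sigma_j)=0$, $\sup(\sigma_j)=1$, and $\inf(\sigma_j^{-1})=-1$, $\sup(\sigma_j^{-1})=0$, the latter coming from $\sigma_j^{-1}=\delta^{-1}\sigma_{j-1}$ (obtained from the defining relation $\sigma_{j-1}\sigma_j=\delta$). Superadditivity of $\inf$ and subadditivity of $\sup$ under multiplication then give the a priori bounds $\inf(\bar y)\ge -n_-(y)$ and $\sup(\bar y)\le n_+(y)$, where $n_\pm(y)$ denotes the number of positive, respectively negative, letters of $y$.

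For the $(\Leftarrow)$ direction I would split into cases. If $y$ is purely positive then the group homomorphism $\phi:\dihed\to\Z$ sending every dual generator to $1$ gives $\phi(\bar y)=n_+(y)$ as well as $\phi(\bar y)=2r+s$, so $\bar y$ is a positive element and $\dualdist(e,\bar y)=2r+s=|y|$ automatically (symmetrically for purely negative $y$). In the mixed case, where necessarily $\dualposs(y)=\dualnegg(y)=1$, I would verify that both a priori bounds are attained, yielding $\dualdist(e,\bar y)=-\inf(\bar y)+\sup(\bar y)=n_-(y)+n_+(y)=|y|$. To do this, rewrite every negative letter via $\sigma_j^{-1}=\delta^{-1}\sigma_{j-1}$ and sweep the accumulated $\delta^{-1}$'s to the left using the commutation $\sigma_m\delta^{-1}=\delta^{-1}\sigma_{m-2}$, which follows from $\delta\sigma_m=\sigma_{m-2}\sigma_{m-1}\sigma_m=\sigma_{m-2}\delta$. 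This produces a factorisation $\bar y=\delta^{-n_-(y)}\tilde w_1\cdots\tilde w_{n_+(y)+n_-(y)}$ with each $\tilde w_i\in\{\sigma_1,\dots,\sigma_k\}$; it remains to check that no $\tilde w_i\tilde w_{i+1}$ equals $\delta$, so that this is in fact the left normal form and hence $r=-n_-(y)$, $r+s=n_+(y)$.

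For the $(\Rightarrow)$ direction, suppose $\dualposs(y)+\dualnegg(y)\ge 3$; by symmetry assume $\dualposs(y)=2$ and $\dualnegg(y)\ge 1$, so $y$ contains a consecutive positive pair $\sigma_i\sigma_{i+1}=\delta$ together with at least one negative letter somewhere. Using $\delta=\sigma_{m-1}\sigma_m$ for any $m$ together with the commutation $\delta\sigma_m=\sigma_{m-2}\delta$, I drag the $\delta$-factor through intervening positive letters (without changing the word length) toward the nearest negative letter, and then apply the reduction $\delta\sigma_m^{-1}=\sigma_{m-1}$ (or dually $\sigma_m^{-1}\delta=\sigma_{m+1}$ if I dragged leftward) the moment the $\delta$ becomes adjacent to a negative letter. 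This shortens $y$ by two generators, showing that it is not a geodesic.

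The main technical hurdle is the case-check in the mixed case of $(\Leftarrow)$: one verifies that if some $\tilde w_i\tilde w_{i+1}=\delta$ in the rewritten word then, after undoing the index shifts (which track the count of negative letters strictly to the right of position $i$), this forces either a $\delta$-forming positive pair in $y$, a $\delta^{-1}$-forming negative pair in $y$, or a free reduction $y_iy_{i+1}=e$—each excluded by the hypotheses $\dualposs(y)=\dualnegg(y)=1$ and free reducedness of $y$.
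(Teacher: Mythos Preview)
Your proof is correct. The $(\Rightarrow)$ direction matches the paper's: locate a $\delta$ subword, commute it next to a negative letter, and cancel (your count of a length-two reduction is in fact right; the paper says ``shorter by one generator'', but either suffices). The $(\Leftarrow)$ direction, however, proceeds differently. The paper argues \emph{dynamically}, following the prefixes $x_n$ of $y$ step by step and using the four cases from the proof of the distance formula to show that $|r|+|r+s|$ increases by one at each step---directly in the purely positive/negative cases, and in the mixed case via an induction tracking the last canonical factor of $x_n$ to rule out cases~(\conda) and~(\condc). You instead argue \emph{statically}, computing the normal form of the endpoint $\bar y$ directly: in the pure cases via the length homomorphism to $\Z$ together with $\inf(\bar y)\ge 0$ (slicker than the paper's case analysis), and in the mixed case by rewriting each $\sigma_j^{-1}$ as $\delta^{-1}\sigma_{j-1}$, sweeping the $\delta^{-1}$'s left, and checking that no $\tilde w_i\tilde w_{i+1}=\delta$. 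That last check does go through exactly as you sketch: writing $N_i$ for the number of negative letters to the right of position $i$, one finds $\tilde w_i\tilde w_{i+1}=\delta$ forces $y_iy_{i+1}\in\{\delta,\delta^{-1},e\}$ according to the signs of $y_i,y_{i+1}$. Your approach has the bonus of exhibiting the left normal form of $\bar y$ explicitly, whereas the paper's inductive argument stays closer to the machinery already set up in the proof of the distance formula and avoids the index bookkeeping.
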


The geodesic growth series of a finitely generated group $G$ with
respect to a generating set $S$ is
\begin{align*}
\grote_{(G,S)}(x) := \sum_{n=0}^\infty a_n x^n,
\end{align*}
where $a_n$ is the number of words of length $n$ that are geodesic
with respect to $S$.

It is obvious from the characterisation of geodesics given above that the
set of geodesic words with respect to the dual set of generators $\dualgens$
is a regular language. It follows that the geodesic growth series is
rational~\cite{word_processing}, that is, can be expressed as the
quotient of two integer-coefficient polynomials in the ring of
formal power series $\Z[[x]]$.
We calculate this growth series explicitly.
\begin{theorem*}
The geodesic growth series of $\dihed$ with the dual generators is
\begin{align*}
\grote(x) =
      \frac{1+(3-2k)x + (2 + k^2 -3k) x^2 - 2k(k-1)x^3}
           {(1-kx)(1-2(k-1)x)(1-(k-1)x)}.
\end{align*}
\end{theorem*}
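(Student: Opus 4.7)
The plan is to use the geodesic criterion just established and split the count by sign pattern. First I observe that $\dualposs(y)$ takes only the values $0$, $1$, $2$: it equals $0$ exactly when $y$ has no positive letter, equals $2$ exactly when some consecutive pair has the form $\sigma_i \sigma_{i+1}$ (indices mod $k$), and equals $1$ otherwise; the analogous trichotomy holds for $\dualnegg$. Consequently the inequality $\dualposs(y)+\dualnegg(y)\le 2$ is automatically satisfied by every freely reduced word whose letters all have the same sign, whereas a freely reduced word containing letters of both signs is a geodesic precisely when $\dualposs(y) = \dualnegg(y) = 1$, i.e.\ it contains neither a consecutive $\sigma_i \sigma_{i+1}$ nor a consecutive $\sigma_i^{-1} \sigma_{i+1}^{-1}$.

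Counting then splits into three cases. A purely positive word admits no cancellation, so there are $k^n$ positive geodesics of length $n \ge 1$, and by symmetry the same number of negative ones. For mixed geodesics I set up a transfer-matrix argument on the $2k$ letters: after any letter $u$, the successor $v$ must avoid both $u^{-1}$ (free reduction) and the unique letter $\sigma_{i+1}^{\pm 1}$ determined by $u = \sigma_i^{\pm 1}$ (the dual constraint); since these two forbidden letters are distinct, each state has exactly $2(k-1)$ outgoing transitions. Hence the total number of freely reduced length-$n$ words satisfying both dual constraints is $2k\,(2(k-1))^{n-1}$, of which $k(k-1)^{n-1}$ are purely positive and $k(k-1)^{n-1}$ are purely negative, leaving
\begin{equation*}
2k\,(2(k-1))^{n-1} - 2k(k-1)^{n-1}
\end{equation*}
mixed geodesics of length $n$.

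Adding the three contributions gives $a_n = 2k^n + 2k(2(k-1))^{n-1} - 2k(k-1)^{n-1}$ for $n \ge 1$ and $a_0 = 1$, so summing three geometric series yields
\begin{equation*}
\grote(x) = 1 + \frac{2kx}{1-kx} + \frac{2kx}{1-2(k-1)x} - \frac{2kx}{1-(k-1)x},
\end{equation*}
and placing this over the common denominator $(1-kx)(1-2(k-1)x)(1-(k-1)x)$ and expanding the numerator recovers the stated rational function.

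The main thing to get right is the case split: the restriction $\dualposs = \dualnegg = 1$ must be imposed only on mixed words, and the count of mixed geodesics is therefore obtained by inclusion-exclusion from the global transfer-matrix count rather than directly. Once this is pinned down, everything else is a routine geometric-series manipulation, with the modest algebraic check that the $x^2$ coefficient of the numerator collapses to $k^2-3k+2$.
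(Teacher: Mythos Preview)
Your proof is correct and is essentially the paper's own argument: the paper packages the same count as the inclusion--exclusion $\grote = \grote_{20} + \grote_{02} + \grote_{11} - \grote_{10} - \grote_{01}$, where $\grote_{ij}$ enumerates freely reduced words with $\dualposs\le i$ and $\dualnegg\le j$, and your three summands $2k^n$, $2k(2(k-1))^{n-1}$, $-2k(k-1)^{n-1}$ are precisely $\grote_{20}+\grote_{02}$, $\grote_{11}$, and $-\grote_{10}-\grote_{01}$. One cosmetic slip worth fixing: after $u=\sigma_i^{-1}$ the dual-forbidden successor is $\sigma_{i-1}^{-1}$ rather than $\sigma_{i+1}^{-1}$ (since $\delta^{-1}=\sigma_{i+1}^{-1}\sigma_i^{-1}$), though this does not affect your count because the two forbidden letters still have opposite signs and hence remain distinct.
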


The geodesic growth series has previously been determined for $A_k$ with
other generating sets. Charney and Meier~\cite{charney_meier_language}
calculate it for the generating sets
$\{\sigma_1^\pm,\dots,\sigma_k^\pm,\delta^\pm\}$
and $M^+ \union M^- \union \{\Delta^\pm\}$.
Sabalka~\cite{sabalka_geodesics} calculates it for the 3--strand
braid group $A_3$ with the Artin generators,
a result which was generalised by Mairesse
and Math\'eus~\cite{mairesse_matheus_growth} to $A_k$; $k\ge 3$,
again with the Artin generators.

\section{Artin generators}
\label{sec:artin}

\begin{proposition}
\label{prop:artindist}
Let $x=\Delta^{r} z_1\cdots z_m$ be an element of $\dihed$ written in left
normal form. Let $(p_0,\dots,p_{k-1})\in\Z^{k}$ be such that
$p_0:=r$ and, for each $i\in\{1,\dots,k-1\}$, $p_i-p_{i-1}= m_{k-i}$,
where $m_i$ is the number of canonical factors of $x$ of length $i$.
Then the distance from the identity $e$ to $x$ in the Artin-generator
word-length metric is
\begin{align*}
\dist(e,x)=\sum_{i=0}^{k-1} |p_i|.
\end{align*}
\end{proposition}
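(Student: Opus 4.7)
The plan is to prove the equality by exhibiting an explicit word for $x$ and showing it is a geodesic via the Mairesse-Math\'eus criterion~(\ref{geo}). The main tools are the identity $\Delta^{-1} z = \bar z^{-1}$ for $z \in M^+$, where $\bar z \in M^+$ is the element satisfying $z\bar z = \Delta$ (so $\bar z$ has length $k-\ell(z)$), and the conjugation automorphism $\tau$ of $\dihed$ induced by $\Delta$, which preserves $M^+$ and satisfies $\Delta^{-1} w = \tau^{-1}(w)\Delta^{-1}$. Together these allow us to absorb any $\Delta^{-1}$ into any canonical factor, saving $2\ell_j$ letters for each absorption into $z_j$.

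Starting from the left normal form $\Delta^r z_1\cdots z_m$, if $r\ge 0$ I would take the word $\prodd(a,b;k)^r z_1\cdots z_m$; its length $kr+\sum_\ell \ell m_\ell$ equals $\sum_i p_i=\sum_i|p_i|$ by direct computation (all $p_i\ge 0$ here). If $r<0$ I would perform greedy absorption: absorb $s:=\min(|r|,m)$ copies of $\Delta^{-1}$ into canonical factors, always into a factor of the largest available length (length $k-1$ first, then $k-2$, and so on), leaving the remaining $|r|-s$ copies of $\Delta^{-1}$ as a block on the left. A case analysis on where $|r|$ falls among the partial sums $c_i := \sum_{j=1}^i m_{k-j}$ then verifies that the word so obtained has length exactly $\sum_i|p_i|$.

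It remains to show that this word $u$ is freely reduced and satisfies $\poss(u)+\negg(u)\le k$, for then~(\ref{geo}) gives $d(e,x) = |u| = \sum_i|p_i|$. Free reduction follows from the alternating shape of $M^+$-elements and of their $\tau$-images. For the criterion: if $r>0$ the word is all positive and contains a $\Delta$-block, so $\poss(u)=k$ and $\negg(u)=0$; if $r<0$ with residual $\Delta^{-1}$'s surviving absorption, the $\Delta^{-(|r|-s)}$ block realises $\negg(u)=k$ and the greedy pairing (longest positive factors becoming shortest negative complements) prevents any adjacent $M^+$-product of positive length; the intermediate case $|r|\le m$ is handled by direct inspection, checking that the maximal positive and negative consecutive products together never exceed $k$.

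The principal obstacle is this last geodesic-criterion verification in the intermediate case, since positive and negative factors interleave through $\tau$-conjugation, and one must rule out any long product forming across the boundary between a $\tau^{-t}(z_i)$ block and an adjacent $\bar z_{j_t}^{-1}$. An alternative approach would be induction on $d(e,x)$ using the claim that right multiplication by any generator changes $\sum_i|p_i|$ by at most $1$; this bypasses~(\ref{geo}) but demands a careful case analysis of how the left normal form transforms under each generator multiplication.
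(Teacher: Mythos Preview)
Your approach is correct and essentially the same as the paper's: both construct the word by greedily absorbing each $\Delta^{-1}$ into a canonical factor of greatest available length and then compute the length of the resulting word in the three cases $r\ge 0$, $-r\ge m$, and $0<-r<m$. The only difference is that the paper simply invokes the result from~\cite{mairesse_matheus_growth} that this algorithm already outputs a geodesic representative, so your ``principal obstacle'' --- directly verifying~(\ref{geo}) in the intermediate case --- is bypassed by citation rather than worked out.
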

\begin{proof}
In~\cite{mairesse_matheus_growth}, there is an algorithm for finding a
geodesic representative of an element $x$ of $\dihed$ given its
normal form. This algorithm consists, in the case when $r<0$,
of shifting each instance of $\Delta^{-1}$ across and combining it with one
of the canonical factors of longest length. This procedure is continued
until all the $\Delta^{-1}$s have been moved across or there are no
more canonical factors with which to multiply. The resulting word is shown to
be a geodesic representative of $x$.

If $r\ge 0$, then the algorithm leaves the normal form unchanged,
and so
\begin{align*}
\dist(e,x)=\sum_{i=1}^{k-1} im_i+kr = \sum_{i=0}^{k-1} p_i,
\end{align*}
which proves the result in this case since here all the $p_i$ are non-negative.

On the other hand, if $-r\ge \sum_{i=1}^{k-1}  m_i$, then all the canonical
factors are changed: each factor of length $i\in\{1,\dots,k-1\}$ is replaced
by a word of length $k-i$.
Therefore
\begin{align*}
\dist(e,x) &= \sum_{i=1}^{k-1} (k-i)m_i+k\Big(-r-\sum_{i=1}^{k-1} m_i\Big) \\
           &= -kr - \sum_{i=1}^{k-1} i m_i \\
           &= -\sum_{i=0}^{k-1} p_i.
\end{align*}
But in this case all the $p_i$ are non-positive and we conclude that the result
holds here also.

The final case to consider is when $0<-r<\sum_{i=1}^{k-1} m_i$.
In this case, there is some $j\in\N$ such that all factors of length
greater than $j$ are changed, all factors of length less than $j$ are
unchanged, and possibly some factors of length $j$ are changed.
So we have
\begin{align*}
\dist(e,x)
     &= \sum_{i=1}^{j-1} i m_i + j \Big(\sum_{i=j}^{k-1} m_i + r\Big)
   + (k-j) \Big(-r-\sum_{i=j+1}^{k-1} m_i\Big) + \sum_{i=j+1}^{k-1} (k-i)m_i \\
     &= p_{k-1} + \cdots + p_{k-j} - p_{k-j-1} - \cdots - p_0.
\end{align*}
Because of the choice of $j$,
we have $\sum_{i=j}^{k-1} m_i \ge -r \ge \sum_{i=j+1}^{k-1} m_i$,
and so $p_i$ is non-negative for $i\ge k-j$ and non-positive for $i<k-j$.
Therefore the result holds in this case also.
\end{proof}

Motivated by this we define the following map.
Let $z$ be an element of $\dihed$. For each $i\in\{1,\dots,k\}$,
let $m_i$ be the number of canonical factors of length $i$ when $z$ is
written in left normal form. We define $\pi: \dihed\to \Z^k$ by
\begin{align*}
\pi(z) := (m_k, m_k+m_{k-1}, \dots, m_k+\dots+m_1).
\end{align*}
Let $w$ and $z$ be two elements of $\dihed$.
We define
\begin{align*}
\phi(w,z):=\pi(w^{-1}z)-\pi(z).
\end{align*}
For $w\in\dihed$, denote by $\tau(w)$ the conjugate of $w$ by $\Delta$,
that is
\begin{align*}
\tau(w):=\Delta^{-1} w \Delta = \Delta w \Delta^{-1}.
\end{align*}

\begin{lemma}
\label{lem:phiconverges}
Let $w\in\dihed$ and let $z_1 z_2 \cdots$ be an infinite word of positive
generators such that no product of consecutive letters equals $\Delta$.
Then $\phi(w,z_1\cdots z_n)$ converges as $n$ tends to infinity.
\end{lemma}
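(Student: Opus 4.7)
The plan is to exploit the locality of Garside normal form computation under left multiplication by a fixed element.

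Because $z$ contains no $\Delta$, the canonical decomposition of each finite prefix $z_1\cdots z_n$ described in the paper coincides with its left normal form $v_1^{(n)}\cdots v_{q(n)}^{(n)}$. A short argument shows this decomposition is monotone in $n$: passing from $z_1\cdots z_n$ to $z_1\cdots z_{n+1}$ either extends the current last factor $v_{q(n)}^{(n)}$ (when the extended word still lies in $M^+$) or appends a new length-one factor, and never disturbs the earlier factors. Consequently, for each fixed $j$ the first $j$ canonical factors $v_1^{(n)},\ldots,v_j^{(n)}$ eventually stabilise and agree with the first $j$ substrings $v_1,v_2,\ldots$ of the unique infinite decomposition of $z$.

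Writing $w^{-1}=\Delta^{r_0}u_1\cdots u_p$ in left normal form, the next step is a locality statement: there exists $T=T(w)$ such that for all sufficiently large $n$, the left normal form of $w^{-1}z_1\cdots z_n$ has the shape
\[
\Delta^{r'}\,u'_1\cdots u'_{p'}\,v_{T+1}^{(n)}\cdots v_{q(n)}^{(n)},
\]
where $r'$, $p'$, and $u'_1,\ldots,u'_{p'}$ depend only on $w$ and on the stabilised prefix $v_1,\ldots,v_T$. I would prove this by induction on $p$, reducing to a single left multiplication by an element of $M^+\cup\{\Delta^{\pm 1}\}$. In the dihedral setting, canonical factors are alternating products in $a,b$ of length at most $k-1$, and the local rewrite needed to restore left-weightedness at a single splice point either terminates immediately, or merges two adjacent factors into $\Delta$ which is then carried to the $\Delta$-exponent at the front; either way the rewriting cannot cascade indefinitely to the right.

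Granted the locality statement, the conclusion is immediate. Each coordinate of $\pi$ is a partial sum involving the $\Delta$-exponent and the multiplicities $m_i$ of canonical factors of length $i$, so the common tail $v_{T+1}^{(n)}\cdots v_{q(n)}^{(n)}$ contributes identically to $\pi(w^{-1}z_1\cdots z_n)$ and to $\pi(z_1\cdots z_n)$ and therefore cancels in the difference $\phi(w,z_1\cdots z_n)$. What remains is a fixed vector, determined by $r'$, $u'_1,\ldots,u'_{p'}$ and by $v_1,\ldots,v_T$, independent of $n$ once $n$ is large enough; hence $\phi(w,z_1\cdots z_n)$ is eventually constant in $n$, which gives the convergence. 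The main obstacle is the locality statement, the subtle point being to rule out long rightward cascades of rewriting along the $v_j^{(n)}$; one handles this by a direct case analysis of how a product of two elements of $M^+$ in $A_k$ splits into a left-weighted pair.
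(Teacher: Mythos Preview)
Your approach is correct and is essentially the same as the paper's: both show that left-multiplication by $w^{-1}$ disturbs only a bounded initial segment of the normal form of $z_1\cdots z_n$, so the common tail contributes identically to $\pi(w^{-1}z_1\cdots z_n)$ and $\pi(z_1\cdots z_n)$ and $\phi$ is eventually constant. The paper states this more tersely---after extracting the $\Delta$'s formed at the junction one is left with $\Delta^{r+s}\tau^r(w')z'$ for some right divisor $z'$ of $z_1\cdots z_n$---while you spell out the locality statement and explicitly flag the bounded-cascade check, but the underlying argument is the same.
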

\begin{proof}
To write $w^{-1}z_1\cdots z_n$ in left normal form,
we first write $w^{-1}$ in left normal form and then repeatedly
take the factors $\Delta$ formed by the joining of $w^{-1}$ and
$z_1\cdots z_n$ out to the left. We obtain something of the form
$\Delta^{r+s}\tau^r(w')z'$, where $r$ is the number of $\Delta$s moved,
$w'$ is a left divisor of $w^{-1}$ and $z'$ is a right divisor of
$z_1\cdots z_n$.
One or both of $w'$ and $z'$ may be the identity.
Since $w$ is of finite length, as $n$ is increased $z'$ must eventually
be different from the identity, and from then on $z'$ will grow in the
same way as
$z_1\cdots z_n$. When $z'$ has grown sufficiently that it contains one of the
canonical factors of $z_1\cdots z_n$, subsequent increases in $n$
will have exactly the same effect on $\pi(w^{-1}z_1\cdots z_n)$ as on 
$\pi(z_1\cdots z_n)$. Therefore $\phi(w,z_1\cdots z_n)$ is eventually constant.
\end{proof}

Recall that $Z$ is the set of possibly infinite words of positive generators
having no product of consecutive letters equal to $\Delta$.
The previous lemma allows us to define $\phi(w,z)$ for $w\in\dihed$ and
$z=z_1 z_2\cdots$ an infinite element of $Z$ to be the limit of
$\phi(w,z_1\cdots z_n)$ as $n$ tends to infinity.

For each $(p,z)\in \Omega'$, define
\begin{align}
\label{psidefinition}
\psi_{p,z} : \dihed \to \Z, \quad
          w\mapsto \sum_{i=0}^{k-1} |p_i+\phi_i(w,z)| - \sum_{i=0}^{k-1} |p_i|.
\end{align}
Note that this formula sometimes requires us to add or subtract infinities.
The convention we shall use will be to separately keep track of the
infinite and finite parts.
Thus $(a\infty + b)+(c\infty+d) = (a+c)\infty+(b+d)$.
Obviously, for $a$ and $b$ finite, $|a\infty+b|$ is equal to $a\infty+b$
if $a> 0$, and is equal to $-a\infty-b$ if $a< 0$.
We see that $\psi_{p,z}$ is always finite because the infinities in the
first term always cancel those in the second.

The following lemma will be needed to show that $\psi$ is constant on the
equivalence classes $\minusclass$ and $\plusclass$.
\begin{lemma}
\label{lem:sumphi}
For all $w$ and $z$ in $\dihed$,
\begin{align*}
\sum_{i=0}^{k-1} \phi_i(w,z) = \sum_{i=0}^{k-1} \pi_i(w^{-1}).
\end{align*}
\end{lemma}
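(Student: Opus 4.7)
The plan is to recognise $\sum_{i=0}^{k-1}\pi_i$ as the image of a natural homomorphism $\lambda:\dihed\to\Z$ and then invoke its additivity.

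First I would unfold the telescoping recursion defining $\pi$ to obtain the closed-form
\begin{equation*}
\sum_{i=0}^{k-1}\pi_i(z) = kr + \sum_{j=1}^{k-1} j\, m_j,
\end{equation*}
where $r$ is the Garside exponent and $m_j$ is the number of canonical factors of length $j$ in the left normal form of $z$. This is pure bookkeeping: each $m_{k-i}$ (for $1\le i\le k-1$) is added into $\pi_i,\pi_{i+1},\dots,\pi_{k-1}$, and therefore contributes $(k-i)$ copies to the sum, while the initial term $r$ appears in all $k$ coordinates.

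Next I would introduce the map $\lambda:\dihed\to\Z$ sending each Artin generator to $1$. The only thing to check for well-definedness is that $\lambda$ respects the defining relation $\prodd(a,b;k)=\prodd(b,a;k)$, which it does because both sides are length-$k$ positive words. Consequently $\lambda(\Delta)=k$, and any element of $M^+$ of length $j$ has $\lambda$-image $j$ (it is literally a positive word of length $j$ in $a,b$). So $\lambda(z) = kr + \sum_j j\, m_j$, and comparing with the displayed formula yields $\sum_{i=0}^{k-1}\pi_i = \lambda$ as functions on $\dihed$.

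The lemma then follows immediately from the homomorphism property:
\begin{equation*}
\sum_{i=0}^{k-1}\phi_i(w,z) = \lambda(w^{-1}z) - \lambda(z) = \lambda(w^{-1}) = \sum_{i=0}^{k-1}\pi_i(w^{-1}).
\end{equation*}
There is no genuine obstacle here; the real work is just the observation that $\sum_i\pi_i$ is the abelianisation-to-$\Z$ length homomorphism, after which the identity is essentially automatic.
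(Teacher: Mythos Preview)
Your proof is correct and takes a genuinely different route from the paper's. The paper argues case-by-case: it tracks how left multiplication by a single positive generator $g$ (respectively by $\Delta$) changes the normal form, deduces that $\sum_i\phi_i(g^{-1},y)=1$ and $\sum_i\phi_i(\Delta,y)=-k$ for all $y$, then writes an arbitrary $w$ as a product of such pieces and uses a cocycle expansion of $\phi$ to conclude that $\sum_i\phi_i(w,z)$ is independent of $z$; evaluating at $z=e$ finishes. You instead compute $\sum_i\pi_i$ once and for all from the normal form, identify it with the exponent-sum homomorphism $\lambda:a,b\mapsto 1$, and let the homomorphism property do the rest. Your argument is shorter and more conceptual---it names the invariant rather than rediscovering its additivity on generators---while the paper's approach has the minor advantage of being entirely internal to the normal-form combinatorics already in play (and serves as a warm-up for the analogous dual-generator lemma). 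Both are sound; yours is the slicker of the two.
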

\begin{proof}
Let $y\in\dihed$. Write $y=y_1 \cdots y_s \Delta^r$ in right
normal form and let $m_i$ be the number of canonical factors of length $i$
for each $i\in\{1,\dots,k\}$, so that $m_k=r$.
Consider the effect of left multiplying $y$ by a positive generator $g$.
Either $g$ combines with $y_1$ to form a longer
factor, in which case $m_i$ decreases by one and $m_{i+1}$ increases
by one, where $i$ is the length of $y_1$,
or a new factor is created, in which case $m_1$ increases by one.
In either case, $\sum_{i=0}^{k-1} (\pi_i(gy)-\pi_i(y)) =1$.
We conclude that
\begin{align}
\label{eqn:sumgen}
\sum_{i=0}^{k-1} \phi_i(g^{-1},y) = 1,
\qquad\text{for all $y\in\dihed$ and $g\in\{a,b\}$.}
\end{align}

Similar reasoning shows that
\begin{align}
\label{eqn:sumdelta}
\sum_{i=0}^{k-1} \phi_i(\Delta,y) = -k,
\qquad\text{for all $y\in\dihed$.}
\end{align}

Any $w\in\dihed$ may be written as a product $w_1\cdots w_l$ of negative
generators and copies of $\Delta$. Observe that
\begin{align*}
\phi(w,z) = \phi(w_1,z) + \phi(w_2,w_1^{-1}z)
                  + \dots + \phi(w_l,w_{l-1}^{-1}\cdots w_1^{-1}z).
\end{align*}
Applying~(\ref{eqn:sumgen}) and~(\ref{eqn:sumdelta}), we see that
$\sum_{i=0}^{k-1} \phi_i(w,z)$ is independent of $z$.
Therefore 
\begin{align*}
\sum_{i=0}^{k-1} \phi_i(w,z)
    = \sum_{i=0}^{k-1} \phi_i(w,e)
    = \sum_{i=0}^{k-1} \pi_i(w^{-1}).
\end{align*}
\end{proof}
So we see that if $p$ is identically $-\infty$, then
\begin{align*}
\psi_{p,z}(w) = -\sum_{i=0}^{k-1} \pi_i(w^{-1})
\end{align*}
is independent of $z$. Likewise, if $p$ is identically $+\infty$, then
\begin{align*}
\psi_{p,z}(w) =  \sum_{i=0}^{k-1} \pi_i(w^{-1}).
\end{align*}
We may therefore consider the map $\psi$ to be defined on $\Omega$.

Define $\dists:=\{ \dist(\cdot,x)-\dist(e,x) \mid x\in \dihed \}$.

\begin{lemma}
\label{lem:bijection}
Restricted to $\Omega_0$, the map $\psi$ is a bijection between $\Omega_0$
and $\dists$.
\end{lemma}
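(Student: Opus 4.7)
The plan is to build an explicit correspondence between $\Omega_0$ and $\dihed$ under which $\psi_{p,z}$ equals $d(\cdot,x)-d(e,x)\in\dists$. For $(p,z)\in\Omega_0$, set $x:=\hat z\,\Delta^{p_0}$, where $\hat z\in\dihed$ is the element produced by the positive word $z$. Because $z\in Z_0$ contains no substring equal to $\Delta$, the canonical $Z$-decomposition $z=z_1\cdots z_m$ is the Garside (right) normal form of $\hat z$ with $\Delta$-exponent zero. Consequently $x=z_1\cdots z_m\,\Delta^{p_0}$ is in right normal form with $\Delta$-exponent $p_0$ and with $\emme_i(z)$ canonical factors of length $i$ for $i<k$. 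Since the $\Delta$-exponent and the length counts are shared by the left and right normal forms, the $\Omega_0$ relation $p_i-p_{i-1}=\emme_{k-i}(z)$ together with $p_0=r(x)$ forces $p=\pi(x)$. The inverse map reads $(\pi(x),z_1\cdots z_m)$ off the right normal form of any $x\in\dihed$, so $(p,z)\mapsto x$ is a bijection $\Omega_0\to\dihed$.

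I would then verify that $\psi_{p,z}(w)=d(w,x)-d(e,x)$. Proposition~\ref{prop:artindist} gives $d(e,x)=\sum_{i=0}^{k-1}|p_i|$, matching the second sum in the definition of $\psi_{p,z}$. For the first sum, $\pi_0(\hat z)=0$ (since $\hat z$ has $\Delta$-exponent zero) combined with the $\Omega_0$ relation yields $p_i-\pi_i(\hat z)=p_0$ for each $i$, so
\begin{align*}
p_i+\phi_i(w,\hat z)=p_0+\pi_i(w^{-1}\hat z)=\pi_i(\Delta^{p_0}\,w^{-1}\hat z),
\end{align*}
and therefore $\sum_i|p_i+\phi_i(w,\hat z)|=d(e,\Delta^{p_0}\,w^{-1}\hat z)$ by Proposition~\ref{prop:artindist}. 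The main obstacle is then the identity $d(e,\Delta^r g)=d(e,g\Delta^r)$ for arbitrary $g\in\dihed$ and $r\in\Z$, needed to rewrite the last quantity as $d(e,w^{-1}\hat z\,\Delta^{p_0})=d(w,x)$. I would prove this identity by iterating $\Delta u=\tau(u)\Delta$ to obtain $\Delta^r g=\tau^r(g)\Delta^r$, and then applying the isometry $\tau^{-r}$, which permutes the Artin generators in $\gens$ and fixes $\Delta$.

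Injectivity of $\psi$ on $\Omega_0$ is then immediate: if $\psi_{p,z}=\psi_{p',z'}$ with associated elements $x,x'$, evaluating $d(\cdot,x)-d(e,x)=d(\cdot,x')-d(e,x')$ at $w=x$ gives $d(x,x')=d(e,x')-d(e,x)$, while at $w=x'$ it gives $d(x,x')=d(e,x)-d(e,x')$; adding, $2d(x,x')=0$, so $x=x'$ and hence $(p,z)=(p',z')$. Surjectivity onto $\dists$ is immediate from the bijection constructed above.
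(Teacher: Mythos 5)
Your proof is correct and is essentially the paper's argument: identify $(p,z)\in\Omega_0$ with the group element $z\Delta^{p_0}$ via the normal form, use Proposition~\ref{prop:artindist} to rewrite $\psi_{p,z}$ as $\dist(\cdot,z\Delta^{p_0})-\dist(e,z\Delta^{p_0})$, and conclude by uniqueness of the normal form (your explicit injectivity computation just spells out what the paper leaves implicit). The only deviation is that you absorb $\Delta^{p_0}$ on the left and then need the identity $\dist(e,\Delta^r g)=\dist(e,g\Delta^r)$ via the $\tau$-isometry, whereas the paper avoids this by observing directly that $p_i+\phi_i(w,z)=\pi_i(w^{-1}z\Delta^{p_0})$, i.e.\ right multiplication by $\Delta^{p_0}$ adds $p_0$ to every component of $\pi$.
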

\begin{proof}
Let $(p,z)\in\Omega_0$.
Observe that $p_i=\pi_i(z)+p_0 = \pi_i(z\Delta^{p_0})$ for all $0\le i\le k-1$.
For each $w\in \dihed$,
\begin{align*}
\psi_{p,z}(w)
      &= \sum_{i=0}^{k-1}
           |p_i + \pi_i(w^{-1} z) - \pi_i(z)| - \sum_{i=0}^{k-1} |p_i| \\
      &= \sum_{i=0}^{k-1}
           |\pi_i(w^{-1} z \Delta^{p_0})|
           - \sum_{i=0}^{k-1} |\pi_i(z \Delta^{p_0})| \\
      &= \dist(w,z\Delta^{p_0}) - \dist(e,z\Delta^{p_0}).
\end{align*}
The result now follows from the fact that every element of $\dihed$ can be
written in a unique way as $z\Delta^{p_0}$ with $z\in Z_0$ and $p_0\in \N$
and that the $p_i;1\le i\le k-1$ are determined by $z$ and $p_0$ for each
$(p,z)$ in $\Omega_0$. 
\end{proof}

\begin{lemma}
\label{lem:closure}
The set $\Omega_0$ is dense in $\Omega$.
\end{lemma}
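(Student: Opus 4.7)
The plan is to construct, for each $(p,z)\in\Omega$, an approximating sequence in $\Omega_0$ converging to it in the quotient topology.

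For the two collapsed classes, I would take $z^{(n)}$ to be the empty word (so every $\emme_i$ vanishes) and $p^{(n)}_i=n$ for all $i$ (respectively $-n$); this places $(p^{(n)},z^{(n)})$ trivially in $\Omega_0$ and makes each coordinate of $p^{(n)}$ tend to $+\infty$ (resp.~$-\infty$), giving convergence to $\plusclass$ (resp.~$\minusclass$).

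For any other $(p,z)\in\Omega\setminus\Omega_0$, necessarily $z$ is infinite, and the inequality $p_i-p_{i-1}\ge\emme_{k-i}(z)\ge 0$ (applied whenever $p_{i-1},p_i$ are not both the same infinity) forces $p$ to have the block form $(-\infty,\dots,-\infty,\,p_j,\dots,p_l,\,+\infty,\dots,+\infty)$, with $p_j,\dots,p_l$ finite and $0\le j\le l+1\le k$. Let $y_n$ be the prefix of $z$ formed by its first $n$ canonical substrings; then $y_n\to z$ in the product topology on $Z$ and $\emme_i(y_n)\nearrow\emme_i(z)$ for each $i$. I would set $p^{(n)}_i:=p_i$ for $j\le i\le l$, and extend the definition to the remaining indices so that $p^{(n)}_i\to\pm\infty$ at any chosen rate. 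The desired $z^{(n)}\in Z_0$ is then obtained by appending further elements of $M^+$ to $y_n$, adjusting the count of substrings of length $k-i$ so that $\emme_{k-i}(z^{(n)})=p^{(n)}_i-p^{(n)}_{i-1}$ for every $i$. On finite--finite transitions the number of extra substrings of length $k-i$ needed is $p_i-p_{i-1}-\emme_{k-i}(y_n)$, which is nonnegative by the $\Omega$-constraint and eventually constant; on the other transitions it grows with $n$.

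To ensure the appended tail actually has the expected canonical decomposition and does not introduce $\Delta$ as a consecutive subword, I would arrange each seam---between consecutive appended substrings, and between $y_n$ and its tail---to present a repeated letter rather than an alternating pair, so that the two adjacent substrings cannot combine into a longer element of $M^+\cup\{\Delta\}$. Since $M^+$ contains two substrings of each length in $\{1,\dots,k-1\}$---one starting with $a$ and one with $b$---each new substring can be chosen to begin with whatever letter the previous one ends in, which leaves enough flexibility to realise any prescribed collection of lengths. Given this, $(p^{(n)},z^{(n)})\in\Omega_0$ for every $n$, $z^{(n)}\to z$ in the product topology (each fixed letter position eventually lies within the growing prefix $y_n$), and $p^{(n)}_i\to p_i$ by design. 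I anticipate the main obstacle to be precisely this combinatorial bookkeeping of non-combinability at the seams while simultaneously hitting the target profile---a local but delicate check.
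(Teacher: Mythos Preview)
Your argument is correct and follows essentially the same plan as the paper's: handle $\plusclass$ and $\minusclass$ via constant sequences $(\pm n,\dots,\pm n,e)$, and otherwise take the prefix of $z$ consisting of its first $n$ canonical factors and pad with additional factors from $M^+$ so that the factor-length counts match a chosen profile $p^{(n)}\to p$. The paper encodes the target profile by the uniform truncation $b_k:=\max(\min(p_0,n),-n)$, $b_{k-i}:=\min(p_i-p_{i-1},n)$ instead of your explicit block decomposition, and simply asserts that the padding can be carried out without your seam discussion---so your version is in fact a little more explicit on the combinatorics, but the two proofs are the same in substance.
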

\begin{proof}
Clearly, $\minusclass$ and $\plusclass$ are in the closure of $\Omega_0$ since
they are the limits, respectively, of $(-n,\dots,-n,e)$ and $(n,\dots,n,e)$,
where $e$ denotes the empty word.

Let $(p,z)\in \Omega\backslash\{\minusclass,\plusclass\}$ and fix $n\in\N$.
Let $x_n$ be the product of the first
$n$ canonical factors of $z$. Define $b_k:=\max(\min(p_0,n),-n)$ and
$b_{k-i}:=\min(p_i-p_{i-1},n)$ for each $i\in\{1,\dots,k-1\}$.
Let $m_i$ denote the number of canonical factors of length $i$ in $x_n$.

For each $i\in\{1,\dots,k-1\}$, we have that $m_i$ is no greater than
the number of canonical factors of length $i$ in $z$, which is no greater than
$p_{k-i}-p_{k-i-1}$. We also have $m_i\le n$.
Therefore $m_i \le b_i$ for all $i\in\{1,\dots,k-1\}$.
So we may multiply $x_n$ on the right by canonical factors to obtain
a word $y_n$ of positive generators such that no product of consecutive
letters equals $\Delta$ and such that, for each $i\in\{1,\dots,k-1\}$,
there are exactly $b_i$ factors of length $i$.

So $(q_n,y_n):=(b_k, b_k + b_{k-1},\dots, b_k + \dots + b_1, y_n)$ is
in $\Omega_0$.

As $n$ tends to infinity, $b_k$ converges to $p_0$ and $b_i$ converges to
$p_{k-i}-p_{k-i-1}$ for $1\le i\le k-1$.
So $\sum_{i=0}^{j} b_{k-i}$ converges to $p_j$ for $j\in\{0,\dots,k-1\}$.
We also have that $y_n$ converges to $z$.
We conclude that $(q_n,y_n)$ converges to $(p,z)$, which must therefore
be in the closure of $\Omega_0$.
\end{proof}

\begin{lemma}
\label{lem:injective}
The map $\psi:\Omega\to \Z^{\dihed}$ is injective.
\end{lemma}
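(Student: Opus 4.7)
My plan is to reconstruct $(p, z) \in \Omega$ explicitly from the function $\psi_{p, z}$.

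The first step is to recover $p$ by restricting $\psi_{p, z}$ to powers of the Garside element $\Delta$. A short calculation with left normal forms shows that for every $z \in Z$ and every $i$, $\phi_i(\Delta^N, z) = -N$ (since left multiplication by $\Delta^{-N}$ merely shifts the $\Delta$-exponent without disturbing the canonical factors), so
\begin{align*}
\psi_{p, z}(\Delta^N) = \sum_{i=0}^{k-1} |p_i - N| - \sum_{i=0}^{k-1} |p_i|.
\end{align*}
This is a piecewise-linear function of $N \in \Z$ whose asymptotic slopes are $k - 2r^+$ at $N \to +\infty$ and $2r^- - k$ at $N \to -\infty$, where $r^{\pm} = \#\{i : p_i = \pm\infty\}$, and whose slope jumps (of size $+2$ times multiplicity) occur exactly at the finite entries of $p$. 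I thus recover $r^+$, $r^-$, and the multiset of finite entries. The defining constraints of $\Omega'$ force $p$ to consist of a block of $-\infty$'s, followed by a weakly increasing finite segment, followed by a block of $+\infty$'s, so the positions of the finite entries are determined by their multiset and $p$ is recovered uniquely.

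The second step proceeds by cases on $p$. If $p \equiv +\infty$ or $p \equiv -\infty$, then $(p, z) \in \{\plusclass, \minusclass\}$, whose equivalence class in $\Omega$ is determined by $p$ alone. If every entry of $p$ is finite, the $\Omega'$ constraint $\emme_{k-i}(z) = p_i - p_{i-1}$ forces $z$ to be finite and places $(p, z) \in \Omega_0$, where Lemma~\ref{lem:bijection} completes the recovery. In the remaining mixed case, I use the identity $\phi_i(w \Delta^N, z) = \phi_i(w, z) - N$ together with the same sign analysis as in the first step to verify that for sufficiently large $N$,
\begin{align*}
\psi_{p, z}(w \Delta^N) - \psi_{p, z}(\Delta^N) = 2 \sum_{i : p_i = +\infty} \phi_i(w, z) - \sum_{i=0}^{k-1} \pi_i(w^{-1}),
\end{align*}
where the last sum depends only on $w$ by Lemma~\ref{lem:sumphi}. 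This recovers $\sum_{p_i = +\infty} \phi_i(w, z)$ for every $w \in \dihed$, and the analogous calculation with $\Delta^{-N}$ yields $\sum_{p_i = -\infty} \phi_i(w, z)$.

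The main obstacle, which I expect to be the most delicate step, is to deduce the word $z$ itself from the knowledge of these partial $\phi$-sums as $w$ ranges over $\dihed$. My plan is to probe with positive words $w$ whose interaction with candidate prefixes of $z$ is transparent: when $w$ matches an initial segment of $z$, the product $w^{-1} z$ simplifies to a tail of $z$ without introducing any $\Delta^{-1}$ into the normal form, whereas a mismatch at position $j$ produces a factor of the form $a^{-1}b$ or $b^{-1}a$ that contributes a characteristic $\Delta^{-1}$ to the normal form of $w^{-1} z$. By systematically varying $w$ and comparing the partial $\phi$-sums, the letters of $z$ should be identifiable one at a time, completing the reconstruction and hence the proof.
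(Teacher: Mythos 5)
Your first two steps are sound and essentially reproduce the paper's opening move: evaluating at powers of $\Delta$ (the paper uses $\Delta^{-c}$, you use $\Delta^{N}$) gives $\sum_i|p_i+c|-\sum_i|p_i|$, whose increments recover the multiset of entries of $p$, and monotonicity of $p$ (forced by the constraints defining $\Omega'$) then pins $p$ down; the degenerate cases ($p$ identically $\pm\infty$, or $p$ entirely finite, which forces $(p,z)\in\Omega_0$ and hands the problem to Lemma~\ref{lem:bijection}) are handled correctly, and your limit formula for $\psi_{p,z}(w\Delta^N)-\psi_{p,z}(\Delta^N)$ is correct and consistent with Lemma~\ref{lem:sumphi}.

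The genuine gap is the final step, which you yourself flag as ``the most delicate'' and leave as a plan: you never prove that the block-sums $\sum_{p_i=+\infty}\phi_i(\cdot,z)$ and $\sum_{p_i=-\infty}\phi_i(\cdot,z)$ determine the infinite word $z$. This is precisely the heart of the lemma, and it is where all the work in the paper's proof lies. Your sketch (``probe with positive words; a mismatch produces $a^{-1}b$ or $b^{-1}a$, hence a characteristic $\Delta^{-1}$; the letters of $z$ should be identifiable one at a time'') is not an argument, and it is pitched at the wrong level of granularity: the quantity you observe is not the full vector $\phi(w,z)$ but only its sums over the infinite blocks of $p$, so the introduction of a $\Delta^{-1}$ and of new canonical factors is visible only insofar as it moves those particular coordinate sums, and whether it does depends on where the $\pm\infty$ block sits relative to the lengths of the canonical factors involved. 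Making this precise requires (a) working with the canonical factors of $z_1$ rather than letters, (b) establishing how $\phi(w_n,\cdot)$ evolves along the prefix products $w_n$ both before and after a mismatch with a second word $z_2$ --- this is the content of the paper's equations~(\ref{eqa}) and~(\ref{eqn:eqb}), whose proof (in particular that after a mismatch every subsequent $x_n^{-1}$ simply contributes $\Delta^{-1}$ times a new positive factor) you would still owe --- and (c) a case analysis using the presence of a $+\infty$ (respectively $-\infty$) entry to show the observed increments genuinely differ; the paper does exactly this, comparing increments $\le l(x_n)-2$ against eventual increments $=l(x_n)$, and needs different probe sequences ($w_n$ versus $w_n\Delta^{-n}$) according to whether $p_0$ is finite or $-\infty$. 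None of this is in your proposal, so as it stands the injectivity of $\psi$ on $\Omega\setminus(\Omega_0\cup\{\plusclass,\minusclass\})$ --- the only non-routine part of the statement --- remains unproved.
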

\begin{proof}
Let $(p,z)\in\Omega'$ and define $f(c):=\psi_{p,z}(\Delta^{-c})$
for all $c\in\Z$.
Since $\phi_i(\Delta^{-c},z)=c$ for all $0\le i\le k-1$, we have
\begin{align*}
f(c)= \sum_{i=0}^{k-1} |p_i + c| - \sum_{i=0}^{k-1} |p_i|.
\end{align*}
Observe that, for $x\in \N$,
\begin{align}
\label{eqn:diff}
|x+c+1|-|x+c| =
\begin{cases}
1, & \text{if $x\ge -c$}, \\
-1, & \text{otherwise}.
\end{cases}
\end{align}
So
\begin{align*}
f(c+1)-f(c) &= \sharp\{i \mid p_i\ge -c\} - \sharp\{i \mid p_i < -c\} \\
 &= 2 \sharp\{i \mid p_i\ge -c\} - k.
\end{align*}
Here $\sharp$ denotes the cardinal number of a set.
Therefore, by calculating $\psi_{p,z}(\Delta^{-c-1})-\psi_{p,z}(\Delta^{-c})$
for each $c\in\N$, we may determine the number of components of $p$
that equal each element of $\Z\union\{-\infty,+\infty\}$.
 Since the components of $p$ are non-decreasing,
we will then have determined $p$. Thus we have shown that if $(p_1,z_1)$
and $(p_2,z_2)$ are elements of $\Omega'$ such that $p_1\neq p_2$,
then $\psi_{p_1,z_1} \neq \psi_{p_2,z_2}$.

Now assume that $p_1=p_2=:p$
but that $(p,z_1)$ and $(p,z_2)$ are elements of distinct equivalence
classes in $\Omega$.
So, $p$ cannot be identically $+\infty$ or identically $-\infty$.
We know from Lemma~\ref{lem:bijection}
that $\psi$ is a bijection between $\Omega_0$
and $\dists$, so we may assume that not all entries of $p$ are finite
and that $z_1$ is an infinite word.
Let $x_n$ be the $n$th canonical factor of $z_1$ and let $w_n$ be the
product $w_n := x_1\cdots x_n$.

We deal first with the case where $p_0$ is finite.
For each canonical factor $y\in M^+$, denote by $l(y)$
the length of $y$, that is the total number of copies of $a$
and $b$ one has to multiply together to get $y$.
Observe that $\phi(w_n,z)-\phi(w_{n-1},z)= \phi(x_n,w_{n-1}^{-1}z)$
for any $z\in Z$. Since the effect of left multiplying $w_{n-1}^{-1}z_1$
by $x_n^{-1}$ is to cancel exactly one canonical factor of length
$l(x_n)$, we get
\begin{align}
\label{eqa}
\phi_i(w_n,z_1)-\phi_i(w_{n-1},z_1) =
   \begin{cases}
   0, & \text{if $i<k-l(x_n)$}, \\
   -1, & \text{otherwise},
   \end{cases}
\end{align}
for all $i\in\{0,\dots,k-1\}$ and $n\in\N$.
From~(\ref{eqn:diff}), we see that
\begin{align*}
\psi_{p,z_1}(w_n) - \psi_{p,z_1}(w_{n-1})
   &= \sum_{i=0}^{k-1} |p_i + \phi_i(w_n,z_1)|
             - \sum_{i=0}^{k-1} |p_i + \phi_i(w_{n-1},z_1)| \\
   &= - \sharp\{i \ge k-l(x_n) \mid p_i\ge -\phi_i(w_n,z_1) \} \\
   & \qquad\qquad   + \sharp\{i \ge k-l(x_n) \mid p_i <  -\phi_i(w_n,z_1) \}.
\end{align*}
Since we have assumed that $p_0$ is finite and not all components of $p$
are finite, we must have that $p_{k-1}=+\infty$. Therefore, the first set
above is not empty, and so
\begin{align}
\label{eqn:minustwo}
\psi_{p,z_1}(w_n) - \psi_{p,z_1}(w_{n-1}) \le l(x_n) -2,
\qquad\text{for all $n\in\N$.}
\end{align}

Now consider $z_2$. Since $z_2\neq z_1$, eventually some $x_n^{-1}$ will
not cancel completely with the first canonical factor of $w_{n-1}^{-1}z_2$
and subsequent left multiplications by $x_{n+1}^{-1},x_{n+2}^{-1},\dots$ will
have the effect of adding more factors.
For each $n\in\N$, let $y_n$ be such that $\Delta^{-1} y_n = x_n^{-1}$.
Since $y_n$ is a positive canonical factor of length $k-l(x_n)$, we get
\begin{align}
\label{eqn:eqb}
\phi_i(w_n,z_2)-\phi_i(w_{n-1},z_2) =
   \begin{cases}
   -1, & \text{if $i<l(x_n)$}, \\
   0, & \text{otherwise},
   \end{cases}
\end{align}
for all $i\in\{0,\dots,k-1\}$ and $n$ large enough.
So, for such $n$,
\begin{align*}
\psi_{p,z_2}(w_n) - \psi_{p,z_2}(w_{n-1})
   &= \sum_{i=0}^{k-1} |p_i + \phi_i(w_n,z_2)|
             - \sum_{i=0}^{k-1} |p_i + \phi_i(w_{n-1},z_2)| \\
   &= - \sharp\{i < l(x_n) \mid p_i\ge -\phi_i(w_n,z_2) \} \\
   &\qquad\qquad   + \sharp\{i < l(x_n) \mid p_i <  -\phi_i(w_n,z_2) \}.
\end{align*}
Let $i\in\{0,\dots,k-1\}$. If there are infinitely many $n\in\N$ such that
$i<l(x_n)$, then, by~(\ref{eqn:eqb}), the sequence $\phi_i(w_n,z_2)$ is
non-increasing and has limit $-\infty$.
But our assumption on $p$ implies that none
of the $p_i$ are equal to $-\infty$. Therefore, there are only a finite number
of $n\in\N$ such that the first set above contains $i$. Since this is true
for any $i$, the first set must eventually be empty.

So there are only finitely many $n$ for which
$\psi_{p,z_2}(w_n) - \psi_{p,z_2}(w_{n-1}) < l(x_n)$.
Comparing this with~(\ref{eqn:minustwo}), we see that $\psi_{p,z_1}$ and
$\psi_{p,z_2}$ cannot be equal.

Now suppose that $p_0=-\infty$.
Note that $\phi_i(w\Delta^{-c},z)=c+\phi_i(w,z)$ for all $w\in \dihed$ and
$0\le i \le k-1$.
So, using~(\ref{eqa}) and~(\ref{eqn:eqb}), we get
\begin{align*}
\phi_i(w_n\Delta^{-n},z_1)-\phi_i(w_{n-1}\Delta^{-n+1},z_1) =
   \begin{cases}
   1, & \text{if $i<k-l(x_n)$}, \\
   0, & \text{otherwise},
   \end{cases}
\end{align*}
for all $n\in\N$,
and
\begin{align*}
\phi_i(w_n\Delta^{-n},z_2)-\phi_i(w_{n-1}\Delta^{-n+1},z_2) =
   \begin{cases}
   0, & \text{if $i<l(x_n)$}, \\
   1, & \text{otherwise},
   \end{cases}
\end{align*}
for $n$  large enough.
Using similar logic to that of the preceding case, we can show that
\begin{align*}
\psi_{p,z_1}(w_n\Delta^{-n}) - \psi_{p,z_1}(w_{n-1}\Delta^{-n+1})
   \le l(x_n) -2
\end{align*}
for all $n\in\N$, and that
\begin{align*}
\psi_{p,z_2}(w_n\Delta^{-n}) - \psi_{p,z_2}(w_{n-1}\Delta^{-n+1})
   = l(x_n)
\end{align*}
for $n$ large enough. So in this case also, $\psi_{p,z_1}$ is different
from $\psi_{p,z_2}$.
\end{proof}

\begin{lemma}
\label{lem:continuous}
The map $\psi:\Omega\to\Z^{\dihed}$ is continuous.
\end{lemma}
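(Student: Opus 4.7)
The plan is to fix $w\in\dihed$ and show that the map $(p,z)\mapsto\psi_{p,z}(w)$ is continuous from $\Omega$ to $\Z$. Since $\Z^\dihed$ carries the product topology, pointwise continuity for every $w$ suffices. As $\Z$ is discrete, this amounts to showing that for every sequence $(p^{(n)},z^{(n)})\to(p,z)$ in $\Omega$, the integer $\psi_{p^{(n)},z^{(n)}}(w)$ is eventually equal to $\psi_{p,z}(w)$.

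Before splitting into cases, I first establish two preparatory facts. The first is a uniform bound: there is a constant $B=B(w)$ with $|\phi_i(w,z)|\le B$ for all $z\in Z$ and all $i\in\{0,\ldots,k-1\}$. This follows by decomposing $w^{-1}=w_1\cdots w_l$ as a product of positive and negative Artin generators and copies of $\Delta^{\pm1}$, telescoping via
\[
\phi(w,z)=\phi(w_1,z)+\phi(w_2,w_1^{-1}z)+\cdots+\phi(w_l,w_{l-1}^{-1}\cdots w_1^{-1}z),
\]
and noting, as in the proof of Lemma~\ref{lem:sumphi}, that each summand perturbs each coordinate of $\pi$ by an amount bounded independently of the base point. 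The second fact is drawn from Lemma~\ref{lem:phiconverges}: the value of $\phi(w,z)$ depends on $z$ only through a finite prefix whose length is controlled by $w$, so whenever $z^{(n)}\to z$ in $Z$ one has $\phi(w,z^{(n)})=\phi(w,z)$ for all $n$ sufficiently large.

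Next I treat the three types of limit points in $\Omega$. At $\plusclass$, convergence in the quotient topology forces $p^{(n)}_i\to+\infty$ for every $i$; once $p^{(n)}_i>B$ for all $i$, every summand $|p^{(n)}_i+\phi_i(w,z^{(n)})|-|p^{(n)}_i|$ reduces to $\phi_i(w,z^{(n)})$, so Lemma~\ref{lem:sumphi} gives $\psi_{p^{(n)},z^{(n)}}(w)=\sum_i\pi_i(w^{-1})=\psi_\plusclass(w)$. The case $\minusclass$ is symmetric. At a point $(p,z)\in\Omega\setminus\{\plusclass,\minusclass\}$, coordinate-wise convergence in $\Omega'$ ensures that each finite $p_i$ is eventually attained exactly by $p^{(n)}_i$, while each infinite $p_i=\pm\infty$ is eventually exceeded in absolute value by $p^{(n)}_i$; combined with the second preparatory fact and a short case analysis on whether $p_i$ is finite, $+\infty$, or $-\infty$, this shows that each summand of $\psi_{p^{(n)},z^{(n)}}(w)$ is eventually equal to the corresponding summand of $\psi_{p,z}(w)$ as prescribed by the infinity convention.

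The main obstacle will be the third case: one must confirm that the infinity convention used to \emph{define} $\psi_{p,z}(w)$ when some $p_i$ are infinite is consistent with the termwise limit obtained from finite approximations. The uniform bound $B(w)$ is the tool that makes this work, since once $|p^{(n)}_i|>B$ the absolute value signs flatten and the termwise computation reduces to exactly what the convention prescribes, so that no spurious cancellation of infinities is needed.
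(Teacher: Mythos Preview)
Your proof is correct and follows essentially the same route as the paper's own argument: both fix $w$, use the uniform boundedness of $\phi(w,\cdot)$ together with Lemma~\ref{lem:sumphi} to handle the classes $\plusclass$ and $\minusclass$, and use the stabilisation property from Lemma~\ref{lem:phiconverges} together with coordinatewise convergence of $p^{(n)}$ to handle the remaining points. The only cosmetic difference is that the paper singles out $\Omega_0$ as a separate case (its points are isolated in $\Omega$), whereas you absorb it into the general non-degenerate case; your treatment covers it since all $p_i$ finite forces $p^{(n)}_i=p_i$ eventually.
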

\begin{proof}
Let $((p^{(n)},z^{(n)}))_{n\in\N}$ be a sequence in $\Omega$ converging to some element
$(p,z)$ of the same set in the topology we have chosen on $\Omega$.
If $(p,z)$ is in $\Omega_0$, then it is isolated and $(p^{(n)},z^{(n)})$ must
eventually be equal to it. So in this case, $\psi_{p^{(n)},z^{(n)}}$ obviously
converges to $\psi_{p,z}$.

Now suppose that $p=(\infty,\dots,\infty)$.
Observe that, for $w\in\dihed$ fixed, $\phi(w,z^{(n)})$ is bounded uniformly
in $n$. So, since each component of $p^{(n)}$
converges to $\infty$, we have, for each $w\in \dihed$, that
\begin{align*}
\psi_{p^{(n)},z^{(n)}}(w) = \sum_{i=0}^{k-1} \phi_i(w,z^{(n)}),
\qquad\text{for $n$ large enough}.
\end{align*}
But, by Lemma~\ref{lem:sumphi}, the right-hand-side is equal to
$\sum_{i=0}^{k-1}\pi_i(w^{-1})$, and this is exactly $\psi_{\plusclass}(w)$.

Similar reasoning shows that $\psi_{p^{(n)},z^{(n)}}$ converges to
$\psi_{\minusclass}$ if $p^{(n)}$ converges to $(-\infty,\dots,-\infty)$.

Suppose finally that $(p,z)$ is in $\Omega\backslash\Omega_0$ and $p$ is
not identically either $+\infty$ or $-\infty$. Then $z^{(n)}$ converges to
$z$ and so, by Lemma~\ref{lem:phiconverges}, $\phi(w,z^{(n)})$ converges to
$\phi(w,z)$ for each $w\in \dihed$. Since also $p^{(n)}$ converges to
$p$, we get that $\psi_{p^{(n)},z^{(n)}}$ converges to $\psi_{p,z}$
by inspecting the definition of $\psi$.
\end{proof}

\begin{theorem}
\label{thm:homeo}
The map $\psi$ is a homeomorphism between $\Omega$ and $\mathcal{M}$.
\end{theorem}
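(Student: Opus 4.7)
The plan is to bootstrap from the four preceding lemmas. By Lemma \ref{lem:continuous} the map $\psi$ is continuous, by Lemma \ref{lem:injective} it is injective, Lemma \ref{lem:bijection} identifies $\psi(\Omega_0)$ with $\dists$, and Lemma \ref{lem:closure} says $\Omega_0$ is dense in $\Omega$. The one missing ingredient is compactness of $\Omega$; once that is secured, a standard compact-to-Hausdorff argument finishes the job.

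First I would verify that $\Omega$ is compact and Hausdorff. To do this one must make explicit the topology on $Z$: realise each finite word as an eventually-terminal sequence by adjoining an end symbol $\dagger$, so that $Z$ embeds as a closed subspace of the compact product space $\{a,b,\dagger\}^{\N}$, with the no-$\Delta$-substring condition also being closed. Then $\Omega'$ is a closed subset of the compact Hausdorff space $(\Z\union\{-\infty,+\infty\})^{k}\times Z$: the inequalities $p_i-p_{i-1}\ge \emme_{k-i}(z)$ pass to limits because $\emme_{k-i}(\cdot)$ is lower semicontinuous on $Z$, and the equality clause when $z$ is finite presents no difficulty because finite words are isolated among the words with the same canonical factors under this topology. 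Collapsing the two disjoint closed fibres $(+\infty,\dots,+\infty)\times Z$ and $(-\infty,\dots,-\infty)\times Z$ to two distinct points yields a compact Hausdorff quotient $\Omega$.

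With compactness in hand the theorem is immediate. Since $\psi\colon\Omega\to\Z^{\dihed}$ is a continuous injection from a compact space into a Hausdorff space, it is a homeomorphism onto its image. Lemma \ref{lem:bijection} gives $\psi(\Omega_0)=\dists$, so continuity combined with the density of $\Omega_0$ forces $\psi(\Omega)\subseteq \overline{\dists}=\mathcal{M}$. Conversely, $\psi(\Omega)$ is compact, hence closed in $\mathcal{M}$, and contains $\dists$, so it contains $\overline{\dists}=\mathcal{M}$. Therefore $\psi(\Omega)=\mathcal{M}$ and $\psi$ is a homeomorphism.

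The step I expect to demand the most care is the first one: the paper only says ``we take the product topology on $\Omega'$'' without pinning down the topology on $Z$, so one must fix a concrete compact topology on $Z$ and check both that $\Omega'$ is closed within the compact ambient space and that the earlier arguments (in particular the approximation in Lemma \ref{lem:closure} and the convergence analysis in Lemma \ref{lem:continuous}) are consistent with that choice. Once this bookkeeping is complete, the conclusion is the textbook fact that a continuous bijection from a compact space to a Hausdorff space is a homeomorphism.
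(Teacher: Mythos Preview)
Your argument is correct and follows essentially the same route as the paper's proof: injectivity plus continuity plus the compact-to-Hausdorff fact gives a homeomorphism onto the image, and then density of $\Omega_0$ together with $\psi(\Omega_0)=\dists$ pins down that image as $\mathcal{M}$. The only difference is that you take the trouble to justify compactness of $\Omega$ explicitly, whereas the paper simply asserts it in passing (``a continuous bijection from a compact space to a Hausdorff one''); your bookkeeping about the topology on $Z$ and the closedness of $\Omega'$ is a genuine fill-in of something the paper leaves implicit.
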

\begin{proof}
The injectivity of $\psi$ was proved in Lemma~\ref{lem:injective}
and so $\psi$ is a bijection from $\Omega$ to $\psi(\Omega)$.
As a continuous bijection from a compact space to a Hausdorff one,
$\psi$ must be a homeomorphism from $\Omega$ to $\psi(\Omega)$.
So $\psi(\Omega)$ is compact and therefore closed.
Since $\Omega=\closure\Omega_0$ by Lemma~\ref{lem:closure} and
$\psi$ is continuous by Lemma~\ref{lem:continuous},
we have $\psi(\Omega_0)\subset \psi(\Omega)\subset \closure\psi(\Omega_0)$.
Taking closures, we get $\psi(\Omega)=\closure\psi(\Omega_0)=\mathcal{M}$,
by Lemma~\ref{lem:bijection}.
\end{proof}

The proof of our characterisation of Busemann points will require
a result from~\cite{AGW-m}:
The Busemann points are precisely those horofunctions $\xi$
for which $H(\xi,\xi)=0$, where the \emph{detour cost} $H(\cdot,\cdot)$
is defined by
\begin{align*}
H(\xi,\eta):=
\liminf_{x\to \xi}
\big( d(b,x) + \eta(x) \big)
\end{align*}
for any pair of horofunctions $\xi$ and $\eta$.

\begin{theorem}
\label{thm:busemann}
A function in $\mathcal{M}$ is a Busemann point if and only if the
corresponding element $(p,z)$ of $\Omega$ is in $\Omega\backslash\Omega_0$
and satisfies the following:
$p_i-p_{i-1} = \emme_{k-i}(z)$ for every $i\in\{1,\dots,k-1\}$ such that
$p_i$ and $p_{i-1}$ are not both $-\infty$ nor both $+\infty$.
\end{theorem}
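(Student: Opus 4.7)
The approach is to apply the detour-cost criterion recalled just before the theorem: a horofunction $\xi$ is a Busemann point exactly when $H(\xi,\xi)=0$. First, Lemma~\ref{lem:bijection} identifies $\psi(\Omega_0)$ with the set $\dists$ of finite functions $\dist(\cdot,y)-\dist(e,y)$, which are not horofunctions and hence not Busemann points; this accounts for the restriction to $\Omega\setminus\Omega_0$ in the statement. It remains to show, for $(p,z)\in\Omega\setminus\Omega_0$, that the stated equality condition is equivalent to $H(\psi_{p,z},\psi_{p,z})=0$.

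For the \emph{sufficiency} direction, assume the equality condition holds. I would construct an explicit geodesic ray $e=y_0,y_1,y_2,\ldots$ in $\dihed$, in which each $y_n$ differs from $y_{n-1}$ by right multiplication by one Artin generator, and verify that the corresponding points of $\Omega$, call them $(p^{(n)},z^{(n)})$, converge to $(p,z)$. The elementary moves available are: appending the next positive generator prescribed by $z$ (which either opens a new canonical factor or extends an existing one, thereby incrementing exactly one entry of $p^{(n)}$), right multiplying by one letter of a fresh copy of $\prodd(a,b;k)$ to push $p_0^{(n)}$ toward $+\infty$, or inserting a letter on the left to push $p_0^{(n)}$ toward $-\infty$. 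The assumed equality $p_i-p_{i-1}=\emme_{k-i}(z)$ on finite segments of $p$ is exactly what guarantees that every elementary move increases $\dist(e,y_n)=\sum_j|p_j^{(n)}|$ by one --- no slack accumulates --- so the sequence is a geodesic. Continuity of $\psi$ (Lemma~\ref{lem:continuous}) then gives $\psi_{y_n}\to\psi_{p,z}$ and hence $H(\psi_{p,z},\psi_{p,z})=0$.

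For the \emph{necessity} direction, suppose some index $i$ with $p_{i-1},p_i$ not both $\pm\infty$ satisfies $p_i-p_{i-1}>\emme_{k-i}(z)$. Let $y_n$ be an arbitrary sequence converging to $\psi_{p,z}$, with corresponding $(p^{(n)},z^{(n)})\in\Omega_0$. Since $p_i^{(n)}-p_{i-1}^{(n)}=\emme_{k-i}(z^{(n)})$ must converge to $p_i-p_{i-1}$ while $z^{(n)}$ converges to $z$ letter by letter, the word $z^{(n)}$ necessarily carries canonical factors of length $k-i$ outside any fixed prefix of $z$, producing a genuine excess. Inserting $y_n$ into the formulas $\dist(e,y_n)=\sum_j|p_j^{(n)}|$ and~(\ref{psidefinition}) for $\psi_{p,z}(y_n)$ and tracking the extra factors as in the calculations of Lemma~\ref{lem:injective}, I would extract the lower bound
\begin{align*}
\dist(e,y_n)+\psi_{p,z}(y_n)\ge 2\bigl(p_i-p_{i-1}-\emme_{k-i}(z)\bigr)
\end{align*}
for all sufficiently large $n$, giving $H(\psi_{p,z},\psi_{p,z})>0$.

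The main obstacle I anticipate is the explicit construction in the sufficiency direction: appending a single generator can either extend a canonical factor or open a new one, and at the interfaces between the $z$-part and the inserted powers of $\Delta^{\pm 1}$ it requires care to avoid unplanned mergings that would break the bookkeeping of Proposition~\ref{prop:artindist}. The crux is to show that, under the equality assumption, one can always schedule the next elementary move so that exactly one coordinate of $p^{(n)}$ advances by $1$ toward its limit while none of the others regresses, securing both geodesicity of the sequence and componentwise convergence in $\Omega$.
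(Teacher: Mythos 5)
Your overall frame (the detour-cost criterion, and excluding $\Omega_0$ because those functions are not horofunctions) is fine, but both of the steps you leave open are exactly the substance of the theorem, and as sketched neither goes through. In the sufficiency direction, your plan to build an explicit geodesic ray has a concrete flaw and an unresolved core. ``Inserting a letter on the left'' is not an admissible step: edges of the Cayley graph are right multiplications by generators, and $d(x,gx)=|x^{-1}gx|$ is in general much larger than $1$. Pushing $p_0$ towards $-\infty$ can only be done by right multiplication by a negative generator that does not cancel into the last canonical factor, and such a step necessarily appends a new canonical factor of length $k-1$ (and, if you track the left normal form, conjugates all earlier factors by $\Delta$); so the schedule must interleave these steps with the letters of $z$, and its feasibility is exactly the ``crux'' you acknowledge but do not resolve. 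The paper avoids this entirely: for this direction it never constructs a geodesic, but takes the (non-path) sequence $x^n$ corresponding to $(q^n,w^n)\in\Omega_0$, with $w^n$ the product of the first $n$ canonical factors of $z$ and $q^n\to p$, computes $\phi_i(x^n,z)=-q^n_i$, and concludes $H(\xi,\xi)\le\liminf_n\sum_i\bigl(|q^n_i|+|p_i-q^n_i|-|p_i|\bigr)=0$; the classes $\plusclass$ and $\minusclass$ are handled separately by the explicit geodesics $\prodd(a,b;n)$ and $\prodd(a^{-1},b^{-1};n)$.

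In the necessity direction, your key inequality $\dist(e,y_n)+\psi_{p,z}(y_n)\ge 2\bigl(p_i-p_{i-1}-\emme_{k-i}(z)\bigr)$ is only asserted. It would have to hold uniformly over \emph{all} sequences converging to $\xi$ (arbitrary $q^{(n)}_0$, excess factors of several lengths at once, and the case of infinite excess where $p_{i-1}$ is finite, $p_i=+\infty$ and $\emme_{k-i}(z)<\infty$), and proving it requires an analysis of $\phi(y_n,z)$ for arbitrary group elements comparable in weight to Lemma~\ref{lem:injective}; nothing in your sketch supplies it. The paper needs no such bound: for this direction it argues directly from the definition of a Busemann point as a limit of a geodesic, noting that each letter of the geodesic word changes exactly one coordinate of $\pi(x_n)$ by one, that geodesy forces each coordinate to move monotonically (so after finitely many steps canonical factors are never destroyed), and hence that the factor counts converge to those of $z$, giving $p_i-p_{i-1}=\emme_{k-i}(z)$. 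In short, you have inverted the paper's division of labour—geodesic construction where the paper uses the detour cost, and a detour-cost lower bound where the paper uses a geodesic—and in both halves the harder, unexecuted step is precisely what remains to be proved.
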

\begin{proof}
Assume $\xi\in\mathcal{M}$ is a Busemann point.
So $\xi$ is the limit of a sequence of group elements $x_n := y_0\cdots y_n$,
where $y$ is an infinite geodesic word.
Write $x_{n-1} = \Delta^r z_1\cdots z_s$ in left normal form
and let $j$ be the length of the last canonical factor $z_s$.
Consider the effect of right multiplying by $y_n$.
There are four cases, corresponding to the four elements of $S$:
\newcommand\condaa{i}
\newcommand\condab{ii}
\newcommand\condac{iii}
\newcommand\condad{iv}
\begin{itemize}
\renewcommand{\labelitemi}{\condaa.}
\item
$y_n$ is positive and $z_s y_n \in M^+\union\{\Delta\}$.
In this case the length of the last canonical factor increases by one and so
$\pi_{k-j-1}(x_n) = \pi_{k-j-1}(x_{n-1}) +1$.
All other components of $\pi(x_n)$ equal those of $\pi(x_{n-1})$;
\renewcommand{\labelitemi}{\condab.}
\item
$y_n$ is positive and $z_s y_n \not\in M^+\union\{\Delta\}$. 
In this case another canonical factor $y_n$ of length one is tacked onto
the end and so $\pi_{k-1}(x_n) = \pi_{k-1}(x_{n-1}) +1$, all other components
being the same;
\renewcommand{\labelitemi}{\condac.}
\item
$y_n$ is negative and $z_s y_n \in M^+\union\{e\}$.
In this case the length of the last canonical factor decreases by one and so
$\pi_{k-j}(x_n) = \pi_{k-j}(x_{n-1}) -1$, all other components
being the same;
\renewcommand{\labelitemi}{\condad.}
\item
$y_n$ is negative and $z_s y_n \not\in M^+\union\{e\}$.
In this case we can see what happens more clearly by right multiplying
$x_n$ by $\Delta^{-1}(\Delta y_n)$ instead of $y_n$. Moving the $\Delta^{-1}$
all the way to the left, we see that the power of $\Delta$ becomes $r-1$,
each canonical factor $z_i$;~$1\le i \le s$ is replaced by $\tau(z_i)$,
and another canonical factor $\Delta y_n$ of length $k-1$ is tacked onto 
the end. So $\pi_{0}(x_n) = \pi_{0}(x_{n-1}) -1$ and all other components
stay the same.
\end{itemize}

In all cases, when going from $\pi(x_{n-1})$ to $\pi(x_{n})$,
a single component is changed, either increased of decreased by one.
Looking at the distance formula of Proposition~\ref{prop:artindist},
we see that, since $y$ is a geodesic word, an increase is only possible
when the relevant component of $\pi(x_{n-1})$ is non-negative,
and a decrease is only possible when it is non-positive.

If case~({\condaa}) occurs infinitely often with $j=k-1$, then
$\pi_{0}(x_n)$ converges to $+\infty$ as $n$ tends to infinity, and
so every component of $\pi(x_n)$ converges to $+\infty$.
In this case, the condition in the statement of the theorem holds trivially.
So we may assume that case~({\condaa}) occurs only finitely many times
with $j=k-1$. Likewise, we may assume that case~({\condac}) occurs only
finitely many times with $j=1$.

One sees that case~({\condab}) creates a new canonical factor of length
one, which can be lengthened by successive applications of case~({\condaa}),
whereas case~({\condad}) creates a new canonical factor of length
$k-1$, which can be shortened by successive applications of case~({\condac}).
For each $n\in\N$, denote by $z^{(n)}$ the word consisting of all the canonical
factors of $x_n$ taken in sequence.
Because of the assumptions of the previous paragraph, eventually, once a
canonical factor has been created it can not be removed.
So if we take the sequence of times $(n_t)_{t\in\N}$ where either
case~({\condab}) or case~({\condad}) occurs, then the difference
between $z^{(n_t)}$ and $z^{(n_{t-1})}$ is that a new canonical factor has
been added and, possibly, that the original canonical factors have been
operated on by $\tau$.

Fix $i\in\{1,\dots,k-1\}$ such that $p_{i-1}$ and $p_i$ are not both $+\infty$
nor both $-\infty$. We have that $\pi_i(x_{n_t})-\pi_{i-1}(x_{n_t})$ is equal
to $m^{n_t}_{k-i}$, the number of canonical factors of length $k-i$ in
$z^{(n_t)}$. But because $z^{(n_t)}$ grows monotonically as $t$ increases,
$m^{n_t}_{k-i}$ converges as $t$ tends to infinity to $m_{k-i}(z)$,
the number of canonical factors of length $k-i$ in $z$.
Therefore,
\begin{align*}
p_i-p_{i-1} = \lim_{t\to\infty} (\pi_i(x_{n_t})-\pi_{i-1}(x_{n_t}))
            = \lim_{t\to\infty} m^{n_t}_{k-i}
            = m_{k-i}(z).
\end{align*}
This establishes the implication in one direction.

Now assume that $\xi\in\mathcal{M}$ corresponds to
$\plusclass$.
For each $n\in\N$, let $x_n:=\prodd(a,b;n)$
and let $(p^{(n)},z^{(n)})$ be the corresponding element of $\Omega_0$.
We see that $p^{(n)}_0 = \lfloor n/k \rfloor$, which
tends to infinity as $n$ tends to infinity. It follows that $p^{(n)}$ converges
to $(+\infty,\dots,+\infty)$ and hence $x_n$ converges to $\xi$
by Theorem~\ref{thm:homeo}.
Since $x_n$ is a geodesic, $\xi$ must be a Busemann point.

When $\xi\in\mathcal{M}$ corresponds to $\minusclass$,
we take $x_n:=\prodd(a^{-1},b^{-1};n)$ and use a similar argument.

Now assume that $\xi$ corresponds to an element
$(p,z)\in\Omega\backslash(\Omega_0\union\{\minusclass,\plusclass\})$
satisfying the condition in the statement of the theorem.
Let $w^n$ be the word consisting of the first $n$ canonical factors of $z$.
Let $j\in\{0,\dots,k-1\}$ be the index of either the first non-negative
component of $p$ or the last non-positive component.
We can choose a sequence of vectors $q^n$ in $\Z^k$
such that $q^n_i-q^n_{i-1}=m_{k-i}(w^n)$ for all $i\in\{1,\dots,k-1\}$,
and such that $q^n_j$ converges to $p_j$.
Since $(q^n,w^n)\in\Omega_0$ for all $n\in\N$, we may consider the
element $x^n$ of $\dihed$ corresponding to $(q^n,w^n)$.
From our assumption on $\xi$, we have that $m_{k-i}(w^n)$ converges
as $n$ tends to infinity to $p_i-p_{i-1}$ for all $i\in\{1,\dots,k-1\}$
such that $p_i$ and $p_{i-1}$ are not both $+\infty$ nor both $-\infty$.

Using this and the definition of $q^n$, we conclude that $q^n$ converges
to $p$ as $n$ tends to infinity. But we also have that $w^n$ converges to $z$
and so, by Theorem~\ref{thm:homeo}, $x^n$ converges to $\xi$.
Multiplying $z$ on the left by $(x^n)^{-1}$ has the effect of canceling
$m_i(w^n)$ factors of length $i$ for each $i\in\{1,\dots,k-1\}$ and
adding a factor $\Delta^{-q^n_0}$. Therefore
\begin{align*}
\phi_i(x^n,z) = -q^n_0 - m_{k-1}(w^n) - \dots - m_{k-i}(w^n) = -q^n_i.
\end{align*}
So
\begin{align*}
H(\xi,\xi) &\le \liminf_{n\to\infty}(\dist(e,x^n)+\psi_{p,z}(x^n)) \\
           &= \liminf_{n\to\infty}\sum_{i=0}^{k-1}
                          \Big(|q_i^n| + |p_i - q_i^n| - |p_i| \Big) \\
           &= 0,
\end{align*}
since $q^n$ converges to $p$.
This proves that $\xi$ is a Busemann point.
\end{proof}

\section{Dual generators}

We establish a formula for the dual-generator word-metric using a technique
originally developed by Fordham~\cite{fordham} to prove a length formula
for Thompson's group $F$.
The following theorem is a right-handed version of one in~\cite{belk_brown}.
\begin{theorem}
\label{thm:lengthtechnique}
Let $G$ be a group with generating set $S$, and let $l:G\to\N$ be a
function. Then $l$ gives the distance with respect to $S$ from the identity
to any given element if and only if
\begin{itemize}
\renewcommand{\labelitemi}{L1.}
\item
$l(e)=0$,
\renewcommand{\labelitemi}{L2.}
\item
$|l(wg)-l(w)|\le 1$ for all $w\in G$ and $g\in S$,
\renewcommand{\labelitemi}{L3.}
\item
if $w\in G\backslash\{e\}$, then there exists $g\in S\union S^{-1}$
such that $l(wg)<l(w)$.
\end{itemize}
\end{theorem}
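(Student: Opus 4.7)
The plan is to prove the two implications separately. The forward direction is straightforward: with $l(w) := \dist(e, w)$, property L1 is immediate, L2 is the triangle inequality applied to the edge between $w$ and $wg$ in the Cayley graph, and L3 follows by multiplying $w$ on the right by the inverse of the last letter of a geodesic word representing $w$.

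For the reverse direction, I would assume L1--L3 and prove the two inequalities $l(w) \le \dist(e, w)$ and $\dist(e, w) \le l(w)$. A preliminary observation is that L2 extends automatically from $S$ to $S \union S^{-1}$: for $g \in S$, applying L2 to the element $wg^{-1}$ and the generator $g$ yields $|l(w) - l(wg^{-1})| \le 1$. The upper bound $l(w) \le \dist(e, w)$ then follows by choosing a geodesic word $w = g_1 \cdots g_n$ with $n = \dist(e, w)$, starting from $l(e) = 0$ by L1, and applying the extended L2 inductively along the partial products.

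For the lower bound $\dist(e, w) \le l(w)$, I would iterate L3. Starting at $w$, produce $h_1 \in S \union S^{-1}$ with $l(wh_1) < l(w)$; if $wh_1 \neq e$, produce $h_2 \in S \union S^{-1}$ with $l(wh_1 h_2) < l(wh_1)$; and so on. The sequence of values of $l$ along this process is strictly decreasing in $\N$, so the iteration must terminate after some $k \le l(w)$ steps at $wh_1 \cdots h_k = e$. Then $w = h_k^{-1} \cdots h_1^{-1}$ is a word of length $k$ in $S \union S^{-1}$, giving $\dist(e, w) \le k \le l(w)$. No serious obstacle is expected; the main points requiring care are the extension of L2 to inverse generators and the termination of the L3 iteration, both of which are routine given that $l$ takes values in $\N$.
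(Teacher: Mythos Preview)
Your argument is correct. The forward direction is immediate, and for the converse you handle both inequalities cleanly; in particular, the extension of L2 from $S$ to $S\cup S^{-1}$ and the termination of the L3 iteration (which forces $l(v)=0\Rightarrow v=e$, since otherwise L3 would produce a negative value of $l$) are exactly the two points that need care, and you treat them properly.

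As for comparison with the paper: there is nothing to compare. The paper does not prove this theorem at all; it states it as ``a right-handed version of one in~\cite{belk_brown}'' and moves on. Your write-up is the standard proof one finds in the cited source, so you have simply supplied what the paper omits.
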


\begin{proposition}
\label{prop:dualdistance}
Let $w=\delta^{r} w_1 w_2\cdots w_s$ be written in left normal
form with respect to the dual generators.
Then the distance between the identity and $w$ with respect to
these generators is given by $\dualdist(e,w)=|r|+|r+s|$.
\end{proposition}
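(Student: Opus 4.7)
The plan is to invoke Theorem~\ref{thm:lengthtechnique} with the candidate length function $l(w) := |r| + |r+s|$, where $w = \delta^r w_1\cdots w_s$ is the left normal form. Property L1 is immediate since the identity has $r = s = 0$. Everything else will follow from a case analysis describing how $(r,s)$ changes under right multiplication by a dual generator.

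To set this up, recall that the simple elements of the dual Garside structure on $\dihed$ are $e$, $\delta$, and $\sigma_1,\dots,\sigma_k$, and that a product $\sigma_{a_1}\cdots\sigma_{a_s}$ is in left normal form iff consecutive pairs do not multiply to $\delta$. The key algebraic identity I would use is $\sigma_j^{-1} = \delta^{-1}\sigma_{j-1}$ (indices mod $k$), which follows from $\delta = \sigma_{j-1}\sigma_j$, together with the explicit conjugation action $\tau(\sigma_i) := \delta^{-1}\sigma_i\delta = \sigma_{i+2}$. Using these, right multiplication by $g \in \dualgens$ falls into four cases: (a) $g=\sigma_j$ with $s=0$ or $w_s g \ne \delta$, appending a new factor and giving $(r,s)\to(r,s+1)$; (b) $g=\sigma_j$ with $w_s g = \delta$, absorbing into the power as $(r,s)\to(r+1,s-1)$; (c) $g=\sigma_j^{-1}$ with $w_s = \sigma_j$, a plain cancellation $(r,s)\to(r,s-1)$; and (d) $g=\sigma_j^{-1}$ otherwise, rewriting $g = \delta^{-1}\sigma_{j-1}$ and shuttling the $\delta^{-1}$ to the left via $w_i \delta^{-1} = \delta^{-1}\tau^{-1}(w_i)$, giving $(r,s)\to(r-1,s+1)$.

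The main obstacle I anticipate is verifying that the result of case (d) is genuinely in left normal form; the earlier consecutive pairs are automatic because $\tau^{-1}$ is a group automorphism, but the new last pair $\tau^{-1}(w_s)\sigma_{j-1}$ must be checked not to equal $\delta$. Such an equality would force $\tau^{-1}(w_s) = \sigma_{j-2}$, hence $w_s = \sigma_j$, contradicting the hypothesis of case (d). Granting this, each case shifts $(r,s)$ by a vector whose effect on $|r| + |r+s|$ is $\pm 1$; computing the four changes one at a time verifies L2.

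For L3, given $w \ne e$ I would produce a reducing generator by splitting on the sign of $r$. If $r \ge 0$ then $r+s > 0$ (else $w = e$): case (c) gives change $-1$ when $s \ge 1$, and case (d) gives change $-1$ when $s = 0$ (so $r \ge 1$). If $r < 0$: case (b) gives change $-1$ when $s \ge 1$, and case (a) gives change $-1$ when $s = 0$. This covers every non-identity element and completes the application of Theorem~\ref{thm:lengthtechnique}.
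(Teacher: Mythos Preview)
Your proposal is correct and follows essentially the same approach as the paper: apply Theorem~\ref{thm:lengthtechnique} to $l(w)=|r|+|r+s|$, analyze the same four cases for right multiplication by a dual generator (your (a)--(d) are the paper's (\condb), (\conda), (\condc), (\condd)), and deduce L2 and L3 from the resulting changes to $(r,r+s)$. If anything, you supply more detail than the paper does---you explicitly verify that the output of case~(d) is in left normal form and give an explicit case split for L3, whereas the paper simply asserts that any of the four cases can be realised by an appropriate choice of $g$.
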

\begin{proof}
Let $l(w):= |r|+|r+s|$. Clearly $l$ satisfies~(L1). Consider
the effect of right multiplying $w$ by a generator $g\in \dualgens$.
Let $v:=wg$ and write this group element in left normal form
$v=\delta^{r'} v_1 v_2\cdots v_{s'}$.
There are four cases to consider:
\begin{itemize}
\renewcommand{\labelitemi}{\conda.}
\item
$g$ is positive and $w_sg=\delta$. In this case $r'=r+1$ and $s'=s-1$.
\renewcommand{\labelitemi}{\condb.}
\item
$g$ is positive and $w_sg\neq \delta$. In this case $r'=r$ and $s'=s+1$.
\renewcommand{\labelitemi}{\condc.}
\item
$g$ is negative and $w_sg=e$. In this case $r'=r$ and $s'=s-1$.
\renewcommand{\labelitemi}{\condd.}
\item
$g$ is negative and $w_sg\neq e$. In this case $r'=r-1$ and $s'=s+1$.
\end{itemize}
In all cases, either $r'=r$ and $r'+s'=r+s \pm 1$,
or $r'=r\pm 1$ and $r'+s'=r+s$. Therefore~(L2) is satisfied.

Also, by choosing $g$ appropriately, we can make whichever of the four
cases we want happen. So we always have the freedom to increase
or decrease either $r$ or $r+s$ by one. It follows that~(L3) holds.
\end{proof}

We note that an algorithm for finding a geodesic representative
of any given word in $A_3$ with respect to the dual generators was
presented in~\cite{xu_genus}.

Observe that the distance formula above has a form similar to
the formula established in Proposition~\ref{prop:artindist}
for the distance with respect to the Artin generators.
This similarity will allow us to calculate the horofunction boundary
and the Busemann points with respect to the dual generators using
the same method as for the Artin generators.

\newcommand\dualcownt{\tilde m}
\newcommand\dualpi{\tilde \pi}
\newcommand\dualphi{\tilde \phi}
\newcommand\dualpsi{\tilde \psi}
\newcommand\dualpara{\tilde \Omega}

As before we define some maps.
For any $w\in\dihed$, let $\dualcownt_1(w)$ and $\dualcownt_2(w)$ be such
that $w$ can be written in left normal form as
$w=\delta^{\dualcownt_2(w)} w_1\cdots w_{\dualcownt_1(w)}$.
Define $\dualpi:\dihed\to \Z^2$ by
\begin{align*}
\dualpi(w) := (\dualcownt_2(w), \dualcownt_1(w) + \dualcownt_2(w) ).
\end{align*}
Finally, let
\begin{align*}
\dualphi(w,z):= \dualpi(w^{-1}z) - \dualpi(z),
\qquad\text{for all $w$ and $z$ in $\dihed$.}
\end{align*}

The proof of the following lemma is similar to its counterpart,
Lemma~\ref{lem:phiconverges}.
\begin{lemma}
Let $w\in\dihed$ and let $z_1z_2\cdots$ be an infinite word of positive
dual generators such that no product of consecutive letters equals
$\delta$.
Then $\dualphi(w,z_1\cdots z_n)$ converges as $n$ tends to infinity.
\end{lemma}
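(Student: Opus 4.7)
The plan is to mirror the proof of Lemma~\ref{lem:phiconverges}, using the dual Garside structure in place of the classical one. Writing $w^{-1} = \delta^r v_1 \cdots v_t$ in dual left normal form, I would build the normal form of $w^{-1} z_1 \cdots z_n$ inductively: at each step, right-multiply by the positive dual generator $z_n$. Because $z_n$ is positive, this multiplication must fall under case~(\conda) or case~(\condb) of the proof of Proposition~\ref{prop:dualdistance} (with case~(\condb) applying trivially when the current state has no non-trivial canonical factors).

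The key step is to show that only finitely many steps can fall under case~(\conda). Each such occurrence requires the current last canonical factor to combine with $z_n$ to form $\delta$. As soon as a step falls under case~(\condb), some $z_j$ becomes the new last canonical factor, and the hypothesis that no consecutive pair $z_j z_{j+1}$ equals $\delta$ then forces every subsequent step to fall under case~(\condb) as well. If instead case~(\conda) occurs at every step, each application consumes one of the factors $v_1, \dots, v_t$, so after at most $t$ steps the state is $\delta^{r+t}$ and the following multiplication must be case~(\condb). Either way, there exists $n_0 \le t+1$ past which every step is case~(\condb).

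For all $n > n_0$, right-multiplying by $z_n$ increases $\dualcownt_1$ by one and leaves $\dualcownt_2$ unchanged, which is identical to the update made to $\dualpi(z_1 \cdots z_n)$ (since the hypothesis on $z$ likewise forces only case~(\condb) to occur from some point on when building the normal form of $z_1 \cdots z_n$ directly). Hence $\dualphi(w, z_1 \cdots z_n) = \dualpi(w^{-1} z_1 \cdots z_n) - \dualpi(z_1 \cdots z_n)$ is constant for all sufficiently large $n$, which proves convergence.

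The main obstacle is verifying that case~(\conda) cannot recur once case~(\condb) has fired; this is precisely where the hypothesis on consecutive letters of $z$ is used, and aside from this observation the argument parallels the Artin-generator case with only routine modifications.
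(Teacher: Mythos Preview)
Your proposal is correct and follows essentially the same approach as the paper, which simply defers to the proof of Lemma~\ref{lem:phiconverges}. Your inductive framing via the cases~(\conda) and~(\condb) of Proposition~\ref{prop:dualdistance} is a natural way to make that deferred argument explicit in the dual setting, where the canonical factors are single generators; the key observation---that once case~(\condb) fires the last factor is some $z_j$ and the hypothesis $z_j z_{j+1}\neq\delta$ forces case~(\condb) forever after---is exactly the dual analogue of the paper's claim that eventually $z'$ ``grows in the same way as $z_1\cdots z_n$''.
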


Let $\dualZ$ be the set of possibly infinite words of positive dual generators
having no product of consecutive letters equal to $\delta$.
The previous lemma allows us to define $\dualphi(w,z)$ for $w\in\dihed$ and
$z=z_1 z_2\cdots$ an infinite element of $\dualZ$ to be the limit of
$\dualphi(w,z_1\cdots z_n)$ as $n$ tends to infinity.

Let $\dualpara'$ denote the set of $(p,z)$ in
$(\Z\union\{-\infty,+\infty\})^2 \times \dualZ$ such that if
$p$ is not identically $-\infty$ nor identically $+\infty$,
then $p_1-p_0=\dualcownt_1(z)$.
We take the product topology on $\dualpara'$.
Let $\dualpara$ be the quotient of $\dualpara'$
obtained by considering all points $((-\infty,-\infty),z)$ with $z\in \dualZ$
to be equivalent,
and all points $((+\infty,+\infty),z)$ with $z\in \dualZ$ to be equivalent.
The former equivalence class we denote simply by $-\tilde\infty$, the
latter by $+\tilde\infty$.

For each $(p,z)\in\dualpara'$, define
\begin{align}
\label{dualpsidefinition}
\dualpsi_{p,z} : \dihed \to \Z, \quad
       w\mapsto |p_0+\dualphi_0(w,z)| + |p_1+\dualphi_1(w,z)| - |p_0| - |p_1|.
\end{align}
We use the same convention as before for adding and subtracting infinities.
The following lemma shows that $\dualpsi$ is constant on the equivalence
classes $-\tilde\infty$ and $+\tilde\infty$.
The proof of this lemma is the same as that of Lemma~\ref{lem:sumphi}.
\begin{lemma}
For all $w$ and $z$ in $\dihed$,
\begin{align*}
\dualphi_0(w,z) + \dualphi_1(w,z)= \dualpi_0(w^{-1}) + \dualpi_1(w^{-1}).
\end{align*}
\end{lemma}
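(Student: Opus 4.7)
The plan is to follow the proof of Lemma~\ref{lem:sumphi} essentially verbatim, replacing $\pi$ and $\phi$ by their duals. The two key identities to establish are
\begin{align*}
\dualphi_0(\sigma_j^{-1}, y) + \dualphi_1(\sigma_j^{-1}, y) &= 1, \\
\dualphi_0(\delta, y) + \dualphi_1(\delta, y) &= -2,
\end{align*}
valid for all $y\in\dihed$ and $j\in\{1,\dots,k\}$.

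To prove the first identity, write $y=\delta^r y_1\cdots y_s$ in left normal form and note that the defining relation $\delta=\sigma_{j+1}\sigma_{j+2}$ gives $\sigma_j\delta=\delta\sigma_{j+2}$, hence $\sigma_j\delta^r=\delta^r\sigma_{j'}$ for some $j'$. Thus $\sigma_j y=\delta^r\sigma_{j'}y_1\cdots y_s$. If $\sigma_{j'}y_1=\delta$ then $\sigma_j y=\delta^{r+1}y_2\cdots y_s$ is in left normal form, so $(\dualcownt_2,\dualcownt_1)$ changes from $(r,s)$ to $(r+1,s-1)$ and $\dualpi$ changes by $(+1,0)$. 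Otherwise, using that the simple elements of the dual Garside structure on $\dihed$ are exactly $\{e,\sigma_1,\dots,\sigma_k,\delta\}$ and that the atom-count grades the positive monoid, a product of two atoms that does not equal $\delta$ cannot be simple; since $y_1$ is an atom its only nontrivial left divisor is itself, so the pair $(\sigma_{j'},y_1)$ is automatically left-weighted. Hence $\delta^r\sigma_{j'}y_1\cdots y_s$ is already in left normal form and $\dualpi$ changes by $(0,+1)$. In both cases the components of $\dualpi(\sigma_j y)-\dualpi(y)$ sum to $1$. The second identity is immediate: $\delta^{-1}y=\delta^{r-1}y_1\cdots y_s$ is in left normal form, so $\dualpi$ changes by $(-1,-1)$.

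To finish, note that each positive dual generator satisfies $\sigma_j=\delta\sigma_{j+1}^{-1}$ (indices mod $k$) by the defining relation, so every $w\in\dihed$ can be written as a product $w_1\cdots w_l$ whose factors are negative dual generators or copies of $\delta$. The telescoping identity
\begin{align*}
\dualphi(w,z)=\dualphi(w_1,z)+\dualphi(w_2,w_1^{-1}z)+\cdots+\dualphi(w_l,w_{l-1}^{-1}\cdots w_1^{-1}z),
\end{align*}
together with the two identities above, shows that $\dualphi_0(w,z)+\dualphi_1(w,z)$ is independent of $z$. Evaluating at $z=e$, where $\dualphi(w,e)=\dualpi(w^{-1})$, yields the claim.

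The only nonroutine step is the case analysis for left multiplication by $\sigma_j$, which rests on the description of dual simple elements and the fact that atom-count gives a grading of the dual positive monoid; once these are in hand everything else is a direct transcription.
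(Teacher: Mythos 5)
Your proof is correct and is essentially the paper's own argument: the paper simply says the proof is the same as that of Lemma~\ref{lem:sumphi}, i.e.\ the per-generator identities (sum $+1$ for a negative dual generator, $-2$ for $\delta$) followed by the decomposition of $w$ into negative generators and copies of $\delta$, telescoping, and evaluation at $z=e$. Your verification that the pair $(\sigma_{j'},y_1)$ is left-weighted whenever $\sigma_{j'}y_1\neq\delta$ correctly fills in the only detail the paper leaves implicit.
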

So we see that if $p=(-\infty,-\infty)$, then
\begin{align*}
\dualpsi_{p,z}(w)=-\dualpi_0(w^{-1}) - \dualpi_1(w^{-1})
\end{align*}
is independent of $z$. Likewise, if $p=(+\infty,+\infty)$, then
\begin{align*}
\dualpsi_{p,z}(w)=\dualpi_0(w^{-1}) + \dualpi_1(w^{-1}).
\end{align*}
We may therefore consider the map $\dualpsi$ to be defined on $\dualpara$.

Let $\dualdists:=\{\dualdist(\cdot,x)-\dualdist(e,x) \mid x\in \dihed \}$
and let $\dualcompactification$ be its closure, that is,
the horofunction compactification
of $\dihed$ with the dual-generator word metric.

Let $\dualfinites$ be the set of finite words with letters in
$\{\sigma_1,\dots,\sigma_k\}$ having no product of consecutive
letters equal to $\delta$ and define
\begin{align*}
\dualpara_0:= \{ (p,z) \in \Z^2\times \dualfinites \mid
                   p_1-p_0 = \dualcownt_1(z) \}.
\end{align*}

Again, we wish to show that $\dualpara$ is homeomorphic to
$\dualcompactification$ with $\dualpara_0$ being mapped to $\dualdists$.
We use the same method we used for the Artin generators.
The proofs of the following results are similar to those of the corresponding
results in Section~\ref{sec:artin}.
\begin{lemma}
Restricted to $\dualpara_0$, the map $\dualpsi$ is a bijection between
$\dualpara_0$ and $\dualdists$.
\end{lemma}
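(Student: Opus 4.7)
My plan is to mirror the proof of Lemma~\ref{lem:bijection}, exploiting the formal parallel between the dual distance formula of Proposition~\ref{prop:dualdistance} and the Artin one in Proposition~\ref{prop:artindist}. The bijection will identify each $(p,z)\in\dualpara_0$ with the group element $x := z\delta^{p_0}\in\dihed$, and I will show that $\dualpsi_{p,z}$ is precisely the normalised distance function $\dualdist(\cdot,x) - \dualdist(e,x)$.

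The first step is to verify that $\dualpi(z\delta^{p_0}) = p$ whenever $(p,z)\in\dualpara_0$. Since $z$ is a positive word of dual generators with no consecutive product equal to $\delta$, it is already in left normal form, giving $\dualpi(z) = (0,\dualcownt_1(z)) = (0,p_1-p_0)$ by the definition of $\dualpara_0$. Pulling the trailing $\delta^{p_0}$ to the left by conjugation (which is an automorphism of $\dihed$ that permutes the dual canonical factors $\{\sigma_1,\ldots,\sigma_k\}$ and preserves the ``no $\delta$ as consecutive product'' condition) turns $z\delta^{p_0}$ into its left normal form $\delta^{p_0}w_1'\cdots w_s'$, so $\dualpi(z\delta^{p_0}) = (p_0,\,p_0+\dualcownt_1(z)) = p$. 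Applied to an arbitrary $y\in\dihed$, the identical argument yields the shift identity $\dualpi_i(y\delta^{p_0}) = \dualpi_i(y) + p_0$ for both $i\in\{0,1\}$, which I flag as the one technical ingredient on which the whole proof hinges.

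With the shift identity in hand, I expand $\dualpsi_{p,z}(w)$ using $\dualphi_i(w,z) = \dualpi_i(w^{-1}z) - \dualpi_i(z)$ and rewrite each absolute value as $|p_i + \dualphi_i(w,z)| = |\dualpi_i(w^{-1}z\delta^{p_0})|$ and $|p_i| = |\dualpi_i(z\delta^{p_0})|$. Summing over $i\in\{0,1\}$ and applying Proposition~\ref{prop:dualdistance} to both $w^{-1}z\delta^{p_0}$ and $z\delta^{p_0}$, together with left-invariance of $\dualdist$, gives
\[
\dualpsi_{p,z}(w) = \dualdist(w,z\delta^{p_0}) - \dualdist(e,z\delta^{p_0}) \in \dualdists.
\]

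Bijectivity then follows from uniqueness of the dual left normal form: any $x\in\dihed$ writes uniquely as $\delta^r v_1\cdots v_s$ with $v_i\in\{\sigma_1,\ldots,\sigma_k\}$ and $v_iv_{i+1}\neq\delta$, and the unique preimage in $\dualpara_0$ is $z := x\delta^{-r}\in\dualfinites$, $p := (r,\,r+s)$. Composing the resulting bijection $\dualpara_0\leftrightarrow\dihed$ with the injection $x\mapsto\dualdist(\cdot,x)-\dualdist(e,x)$ (injective because this function attains its strict minimum at $x$) completes the proof. The only real obstacle is the shift identity itself, which ultimately reduces to the well-known fact from dual Garside theory that conjugation by $\delta$ permutes the simple elements.
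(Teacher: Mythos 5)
Your argument is correct and is exactly the adaptation the paper intends: it mirrors the proof of Lemma~\ref{lem:bijection}, using the shift identity $\dualpi_i(y\delta^{p_0})=\dualpi_i(y)+p_0$ (valid because conjugation by $\delta$ permutes $\sigma_1,\dots,\sigma_k$ and fixes $\delta$, so it preserves left normal forms) together with Proposition~\ref{prop:dualdistance} and uniqueness of the dual normal form. Nothing is missing; your explicit treatment of the shift identity and of injectivity of $x\mapsto\dualdist(\cdot,x)-\dualdist(e,x)$ is, if anything, slightly more detailed than the paper's Artin-generator prototype.
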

\begin{lemma}
The set $\dualpara_0$ is dense in $\dualpara$.
\end{lemma}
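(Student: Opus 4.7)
My plan is to closely mirror the proof of Lemma~\ref{lem:closure}, noting that things simplify considerably in the dual setting because $p$ has only two components and the sole constraint on non-$\pm\tilde\infty$ points of $\dualpara'$ is $p_1 - p_0 = \dualcownt_1(z)$. First I would dispose of the two quotient classes: the sequences $((n,n),e)$ and $((-n,-n),e)$ (with $e$ the empty word) lie in $\dualpara_0$ since $\dualcownt_1(e)=0$, and in the product topology on $\dualpara'$ they converge to representatives of $+\tilde\infty$ and $-\tilde\infty$ respectively.

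For a general $(p,z) \in \dualpara \setminus \{-\tilde\infty,+\tilde\infty\}$, I would construct approximants by case analysis on which of $p_0, p_1$ are infinite. If both are finite then the defining constraint gives $\dualcownt_1(z) = p_1 - p_0 < \infty$, so $z \in \dualfinites$ and $(p,z) \in \dualpara_0$ already. Otherwise $z$ must be infinite, so taking $y_n$ to be the prefix of $z$ consisting of its first $n$ dual canonical factors (so $y_n \in \dualfinites$ with $\dualcownt_1(y_n) = n$), I would set
\begin{align*}
q^n := \begin{cases}
  (p_0,\, p_0 + n), & p_0 \in \Z,\ p_1 = +\infty, \\
  (p_1 - n,\, p_1), & p_0 = -\infty,\ p_1 \in \Z, \\
  (-\lfloor n/2 \rfloor,\, \lceil n/2 \rceil), & p_0 = -\infty,\ p_1 = +\infty.
\end{cases}
\end{align*}
In each subcase one immediately checks that $q^n \in \Z^2$ and $q^n_1 - q^n_0 = n = \dualcownt_1(y_n)$, so $(q^n,y_n) \in \dualpara_0$; component-wise convergence $q^n \to p$ together with prefix convergence $y_n \to z$ then yields $(q^n,y_n) \to (p,z)$ in $\dualpara'$, hence in $\dualpara$.

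The only mild subtlety is the doubly-infinite subcase $p_0 = -\infty,\, p_1 = +\infty$, where the two coordinates of $q^n$ must diverge to opposite infinities while respecting the integer relation $q^n_1 - q^n_0 = \dualcownt_1(y_n)$; the symmetric split above handles this cleanly. Beyond that, the argument is routine once the Artin-case template from Lemma~\ref{lem:closure} is in hand.
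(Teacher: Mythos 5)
Your proof is correct and is essentially the argument the paper intends: the paper omits the details, stating only that the proof is similar to that of Lemma~\ref{lem:closure}, and your construction is exactly that argument transposed to the dual setting, where the single equality constraint $p_1-p_0=\dualcownt_1(z)$ makes the prefix-plus-shifted-integer-pair approximation (and the separate treatment of $\pm\tilde\infty$ via $((\pm n,\pm n),e)$) suffice. The case analysis is complete and each choice of $q^n$ verifiably lands in $\dualpara_0$ and converges componentwise, so nothing is missing.
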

\begin{lemma}
The map $\dualpsi:\dualpara\to \Z^{\dihed}$ is injective.
\end{lemma}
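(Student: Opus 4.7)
The plan is to mirror the proof of Lemma~\ref{lem:injective} from the Artin-generator section, exploiting the close structural parallel. The argument splits naturally into two steps: first recover $p$ from $\dualpsi_{p,z}$, then recover the equivalence class of $z$ in $\dualpara$ given $p$.

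For the first step, I would evaluate $\dualpsi_{p,z}$ along $\delta^{-c}$, $c\in\Z$. Since $\dualphi_i(\delta^{-c},z)=c$ for $i\in\{0,1\}$, the value $f(c):=\dualpsi_{p,z}(\delta^{-c})$ simplifies to $|p_0+c|+|p_1+c|-|p_0|-|p_1|$ and depends only on $p$. Applying~(\ref{eqn:diff}) gives
\[
f(c+1)-f(c) = 2\,\#\{i\in\{0,1\} \mid p_i \ge -c\} - 2,
\]
so as $c$ ranges over $\Z$ one recovers the multiset of components of $p$. Together with $p_0\le p_1$ (a consequence of $p_1-p_0=\dualcownt_1(z)\ge 0$), this pins down $p$.

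For the second step, assume $p_1=p_2=:p$ while $(p,z_1)$ and $(p,z_2)$ lie in distinct equivalence classes of $\dualpara$. By the preceding bijection lemma between $\dualpara_0$ and $\dualdists$, one may assume $p$ is not identically $\pm\infty$ and that $z_1$ is an infinite word. Let $x_n\in\{\sigma_1,\dots,\sigma_k\}$ be the $n$th canonical factor of $z_1$ and put $w_n:=x_1\cdots x_n$. The key computation is that cancellation is clean for $z_1$, giving $\dualphi(w_n,z_1)-\dualphi(w_{n-1},z_1)=(0,-1)$ for every $n$, whereas once $n$ is large enough that the first canonical factor of $w_{n-1}^{-1}z_2$ differs from $x_n$, rewriting $x_n^{-1}=\delta^{-1}\sigma_{j-1}$ (for $x_n=\sigma_j$, indices modulo $k$) and pushing $\delta^{-1}$ leftward through the existing $\delta$-exponent yields $\dualphi(w_n,z_2)-\dualphi(w_{n-1},z_2)=(-1,0)$. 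The update touches only the second coordinate for $z_1$ and only the first coordinate for $z_2$.

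I would then finish as in the Artin proof: when $p_0$ is finite (forcing $p_1=+\infty$), the $\dualpsi_{p,z_1}$-increment along $(w_n)$ is constantly $-1$, while $\dualphi_0(w_n,z_2)\to -\infty$ combined with $p_0$ finite forces the $\dualpsi_{p,z_2}$-increment to be $+1$ for all large $n$; when $p_0=-\infty$, apply the same reasoning to the shifted sequence $(w_n\delta^{-n})$, using $\dualphi_i(w\delta^{-c},z)=c+\dualphi_i(w,z)$. In every sub-case the two functions eventually disagree. The main technical obstacle is verifying that the non-cancelling update really produces the increment $(-1,0)$: after the rewrite one obtains $x_n^{-1}y$ in the form $\delta^{r-1}\sigma_{j'}y_1\cdots y_s$ for an appropriate index $j'$, and one must check that $\sigma_{j'}y_1 \ne \delta$ to certify that the result is already in left normal form, which follows from the defining property of $\dualZ$ that no two consecutive letters of $z_2$ multiply to $\delta$.
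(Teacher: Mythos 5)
Your proposal is correct and takes essentially the same route as the paper, which proves this lemma simply by transposing the argument of Lemma~\ref{lem:injective} to the dual setting: first recover $p$ from the values $\dualpsi_{p,z}(\delta^{-c})$, then separate $(p,z_1)$ from $(p,z_2)$ by comparing increments of $\dualpsi$ along the prefixes $w_n$ of $z_1$ (shifted to $w_n\delta^{-n}$ when $p_0=-\infty$), using the clean increment $(0,-1)$ for $z_1$ against the eventual increment $(-1,0)$ for $z_2$. One small correction to your final remark: the no-merge check $\sigma_{j'}y_1\neq\delta$ follows at the first non-cancelling step from the disagreement between $x_n$ and the corresponding letter of $z_2$, and at all later steps from the fact that consecutive letters of $z_1$ (whose shifted inverses are being prepended) do not multiply to $\delta$, with the defining property of $z_2$ only guaranteeing that the untouched tail stays in normal form; this does not affect the validity of the claimed increments.
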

\begin{lemma}
The map $\dualpsi:\dualpara\to\Z^{\dihed}$ is continuous.
\end{lemma}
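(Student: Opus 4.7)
The plan is to mirror the four-case argument in the proof of Lemma~\ref{lem:continuous}, replacing the vector $(p_0,\dots,p_{k-1})$ by the pair $(p_0,p_1)$ and the quantity $\phi$ by $\dualphi$. Specifically, let $((p^{(n)},z^{(n)}))_{n\in\N}$ be a sequence in $\dualpara$ converging to $(p,z)$, and split according to the location of $(p,z)$.

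First, if $(p,z)\in\dualpara_0$, then $p\in\Z^2$ and $z\in\dualfinites$, so this point is isolated in the product topology on $\dualpara$; hence $(p^{(n)},z^{(n)})$ equals $(p,z)$ for all large $n$, and continuity at $(p,z)$ is automatic.

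Next, if $(p,z)=+\tilde\infty$, fix $w\in\dihed$. The key observation is that $\dualphi(w,z^{(n)})$ is bounded uniformly in $n$, because left-multiplying a normal form by $w^{-1}$ only changes $\dualcownt_1$ and $\dualcownt_2$ by amounts controlled by the length of $w$. Since both components of $p^{(n)}$ tend to $+\infty$, for large $n$ both arguments of the absolute values in \eqref{dualpsidefinition} are positive, and the equation collapses to $\dualpsi_{p^{(n)},z^{(n)}}(w)=\dualphi_0(w,z^{(n)})+\dualphi_1(w,z^{(n)})$, which by the preceding sum lemma equals $\dualpi_0(w^{-1})+\dualpi_1(w^{-1})=\dualpsi_{+\tilde\infty}(w)$. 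The case $(p,z)=-\tilde\infty$ is symmetric: the arguments of the absolute values become eventually negative, and the sign flips yield $\dualpsi_{-\tilde\infty}(w)$.

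Finally, suppose $(p,z)\in\dualpara\backslash\dualpara_0$ with $p$ not identically $+\infty$ nor identically $-\infty$. Then $z^{(n)}\to z$ in $\dualZ$, and by the dual version of Lemma~\ref{lem:phiconverges} stated just above, $\dualphi(w,z^{(n)})\to\dualphi(w,z)$ for every $w\in\dihed$. Combined with componentwise convergence $p^{(n)}\to p$, a direct inspection of \eqref{dualpsidefinition} gives $\dualpsi_{p^{(n)},z^{(n)}}(w)\to\dualpsi_{p,z}(w)$. The one technical point—and the mildest of obstacles—is that when exactly one of $p_0,p_1$ is infinite, one must track separately the finite and infinite parts of $p^{(n)}_i+\dualphi_i(w,z^{(n)})$ and $p^{(n)}_i$, as in the Artin case; this works because $\dualphi$ only takes finite integer values, so the infinite parts in the two terms of \eqref{dualpsidefinition} cancel in the limit and the finite parts converge by the usual continuity of $|\cdot|$.
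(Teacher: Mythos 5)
Your proposal is correct and follows exactly the route the paper intends: the paper gives no separate proof for this lemma, stating only that it is proved like its Artin counterpart (Lemma~\ref{lem:continuous}), and your four-case argument — isolated points of $\dualpara_0$, the two classes $\pm\tilde\infty$ via the dual sum lemma and uniform boundedness of $\dualphi(w,z^{(n)})$, and the remaining boundary points via the dual version of Lemma~\ref{lem:phiconverges} together with componentwise convergence of $p^{(n)}$ — is precisely that adaptation, including the correct handling of the mixed finite/infinite case.
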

\begin{theorem}
The map $\dualpsi$ is a homeomorphism between $\dualpara$ and
$\dualcompactification$.
\end{theorem}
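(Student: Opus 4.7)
The plan is to mirror exactly the proof of Theorem~\ref{thm:homeo}, since the four dual-generator lemmas stated just above supply the direct analogues of Lemmas~\ref{lem:bijection}, \ref{lem:closure}, \ref{lem:injective}, and \ref{lem:continuous}. The argument is purely topological and uses only these four inputs together with compactness of $\dualpara$.

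First I would observe that $\dualpara$ is compact. The ambient space $\dualpara'$ sits inside $(\Z\cup\{-\infty,+\infty\})^2\times \dualZ$, each factor of which is compact in the product topology (for $\dualZ$, one regards possibly infinite words as elements of the one-point compactification of the tree of finite words of positive dual generators avoiding $\delta$), and the defining condition on $p_1-p_0$ cuts out a closed subset; the quotient identifying the two distinguished classes is then also compact.

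Next, by the injectivity lemma, $\dualpsi$ is a continuous bijection from the compact space $\dualpara$ onto $\dualpsi(\dualpara)\subset\Z^{\dihed}$. Since $\Z^{\dihed}$ is Hausdorff in the product topology, this forces $\dualpsi$ to be a homeomorphism onto its image and $\dualpsi(\dualpara)$ to be closed. To identify the image with $\dualcompactification$, combine the bijection lemma, which gives $\dualpsi(\dualpara_0)=\dualdists$, with the density lemma $\closure\dualpara_0=\dualpara$ and continuity of $\dualpsi$: one obtains
\begin{equation*}
\dualdists = \dualpsi(\dualpara_0) \subset \dualpsi(\dualpara) \subset \closure\dualpsi(\dualpara_0) = \closure\dualdists = \dualcompactification.
\end{equation*}
Closedness of $\dualpsi(\dualpara)$ then forces equality, completing the proof.

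The main obstacle is genuinely not in this theorem but in the preceding lemmas, especially the injectivity statement, which by analogy with Lemma~\ref{lem:injective} should require a delicate case analysis distinguishing how two distinct infinite words $z_1,z_2\in\dualZ$ force $\dualpsi_{p,z_1}$ and $\dualpsi_{p,z_2}$ to diverge along a suitable probing sequence such as $w_n$ or $w_n\delta^{-n}$. Once those four lemmas are in hand, the theorem follows by the above short topological deduction, formally identical to that of Theorem~\ref{thm:homeo}.
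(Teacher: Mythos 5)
Your proposal is correct and follows essentially the same route as the paper, which proves this theorem by repeating verbatim the topological argument of Theorem~\ref{thm:homeo}: the continuous injection from the compact space $\dualpara$ into the Hausdorff space $\Z^{\dihed}$ is a homeomorphism onto its closed image, and the image is identified with $\dualcompactification$ via the density of $\dualpara_0$ and the bijection onto $\dualdists$. Your added remark on why $\dualpara$ is compact is a harmless elaboration of a point the paper leaves implicit.
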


The proof of the following theorem uses the same reasoning as that of
Theorem~\ref{thm:busemann}.
\begin{theorem}
In the horoboundary of $\dihed$ with the dual-generator word metric,
all horofunctions are Busemann points.
\end{theorem}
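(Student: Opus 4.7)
The plan is to mirror the proof of Theorem~\ref{thm:busemann}: for every $(p,z) \in \dualpara \setminus \dualpara_0$ I will exhibit a sequence of group elements whose horofunctions converge to $\xi := \dualpsi_{p,z}$ and verify the detour--cost criterion $H(\xi,\xi) = 0$, which identifies $\xi$ as a Busemann point. The structural reason all horofunctions come out Busemann in the dual case is that the equality $p_1-p_0 = \dualcownt_1(z)$ is already built into the definition of $\dualpara'$, so no auxiliary equality condition (as in the Artin case) can fail.

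The two equivalence classes $\plusclass$ and $\minusclass$ are handled first by explicit geodesics: the truncations of $\sigma_1 \sigma_2 \sigma_1 \sigma_2 \cdots$ are $\delta^n$, which by Proposition~\ref{prop:dualdistance} have $\dualdist(e,\delta^n) = 2n$, so the word $\sigma_1\sigma_2\sigma_1\sigma_2\cdots$ is an infinite geodesic whose partial products correspond to $((n,n), e) \in \dualpara_0$ and therefore converge to $\plusclass$; the symmetric argument with $\sigma_1^{-1}\sigma_2^{-1}\cdots$ handles $\minusclass$. Now fix $(p,z) \in \dualpara \setminus (\dualpara_0 \cup \{\plusclass,\minusclass\})$. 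Since the constraint $p_1 - p_0 = \dualcownt_1(z)$ forces $\dualcownt_1(z) = +\infty$ whenever $p$ has a finite component, $z$ is an infinite word and $p$ has at least one infinite component (but not both the same). Let $w^n$ be the product of the first $n$ canonical factors of $z$, so $\dualpi(w^n) = (0,n)$, and choose $q^n \in \Z^2$ with $q^n_1 - q^n_0 = n$ and $q^n \to p$ componentwise: set $q^n_0 = p_0$ or $q^n_1 = p_1$ whichever is finite, and in the case $p = (-\infty, +\infty)$ let $q^n_0 \to -\infty$ slowly enough that $q^n_1 = q^n_0 + n \to +\infty$ as well. Then $(q^n, w^n) \in \dualpara_0$; the corresponding group element is $x^n = w^n \delta^{q^n_0}$, and by continuity of $\dualpsi$ we have $\dualdist(\cdot, x^n) - \dualdist(e, x^n) \to \xi$.

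A short direct computation shows $\dualphi(x^n, z) = (-q^n_0, -q^n_1)$, since left--multiplying $z$ by $(x^n)^{-1} = \delta^{-q^n_0}(w^n)^{-1}$ cancels the first $n$ canonical factors and prepends $\delta^{-q^n_0}$. Combining this with Proposition~\ref{prop:dualdistance} yields
\begin{align*}
\dualdist(e, x^n) + \dualpsi_{p,z}(x^n) = \sum_{i=0}^{1} \bigl( |q^n_i| + |p_i - q^n_i| - |p_i| \bigr),
\end{align*}
and case--checking shows each summand tends to $0$: on a finite component we have $q^n_i = p_i$ eventually, and on an infinite component the $\pm\infty$--bookkeeping convention of Section~\ref{sec:artin} makes the infinite parts cancel, leaving $|q^n_i| - q^n_i \cdot \operatorname{sgn}(p_i) = 0$ for large $n$. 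Hence $H(\xi,\xi) \le 0$, and the reverse inequality is automatic, so $\xi$ is a Busemann point. The only real obstacle is the bookkeeping in this last limit computation across the three sub--cases $p = (c, +\infty)$, $p = (-\infty, c)$ and $p = (-\infty, +\infty)$; the remainder of the argument is a direct translation of the proof of Theorem~\ref{thm:busemann} to the simpler two--coordinate setting.
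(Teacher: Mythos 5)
Your proposal is correct and is essentially the paper's own argument: the paper proves this theorem by invoking the same reasoning as Theorem~\ref{thm:busemann}, namely the detour-cost criterion $H(\xi,\xi)=0$ applied to approximating elements $(q^n,w^n)\in\dualpara_0$ built from prefixes of $z$, which is exactly what you carry out, and your key computations ($\dualpi(w^n)=(0,n)$, $\dualphi(x^n,z)=(-q^n_0,-q^n_1)$, and the vanishing of each summand $|q^n_i|+|p_i-q^n_i|-|p_i|$) all check out. One tiny remark: for $\minusclass$ the exactly symmetric choice is $\sigma_2^{-1}\sigma_1^{-1}\sigma_2^{-1}\cdots$, whose even partial products are $\delta^{-n}$ (note $\sigma_1^{-1}\sigma_2^{-1}\neq\delta^{-1}$); your word $\sigma_1^{-1}\sigma_2^{-1}\cdots$ still works, since it satisfies the geodesic criterion and its partial products have $\dualpi$ tending to $(-\infty,-\infty)$, but this deserves the one-line verification.
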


We use our distance formula to characterise the geodesic words of $\dihed$
with the dual generators.
\begin{proposition}
\label{dualgeodesics}
Let $x\in \dihed$ and let $y$ be a freely reduced word of dual generators
representing $x$. Then $y$ is a geodesic if and only if
$\dualposs(y)+\dualnegg(y) \le 2$.
\end{proposition}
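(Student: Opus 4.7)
The plan is to exploit Proposition~\ref{prop:dualdistance} by tracking the left normal form of each prefix of $y$, extracting an algebraic condition for geodesicity, and then matching it with $\dualposs$ and $\dualnegg$. Write $y=y_1\cdots y_n$ and, for each $i$, let $\delta^{r_i}w_1^{(i)}\cdots w_{s_i}^{(i)}$ be the left normal form of the prefix $y_1\cdots y_i$. The four cases~(\conda)--(\condd) in the proof of Proposition~\ref{prop:dualdistance} describe how $(r,s)$ evolves when we append a letter; let $I,II,III,IV$ count the number of steps of each respective type. Then $r_n=I-IV$ and $r_n+s_n=II-III$, and Proposition~\ref{prop:dualdistance} yields
\begin{align*}
\dualdist(e,x)=|I-IV|+|II-III|\le(I+IV)+(II+III)=n,
\end{align*}
with equality (that is, $y$ is a geodesic) if and only if $\min(I,IV)=0$ and $\min(II,III)=0$. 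The proof thus reduces to showing this min-condition is equivalent to $\dualposs(y)+\dualnegg(y)\le 2$.

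For sufficiency: if $y$ has no negative (resp.\ no positive) letters then $III=IV=0$ (resp.\ $I=II=0$), and the condition holds trivially; otherwise $\dualposs+\dualnegg\le 2$ forces $\dualposs=\dualnegg=1$, so $y$ has neither a $\delta$- nor a $\delta^{-1}$-subproduct. I would show by induction on $i$ that every step is of type~(\condb) or~(\condd), which gives $I=III=0$. The key point is that after step $i-1$ the factor $w_s^{(i-1)}$ equals either $y_{i-1}$ (when step $i-1$ was~(\condb)) or $\sigma_{c-1}$ with $y_{i-1}=\sigma_c^{-1}$ (when step $i-1$ was~(\condd), the shift-by-two arising from $\sigma_m\delta=\delta\sigma_{m+2}$). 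A case~(\conda) step at $i$ with $y_i=\sigma_j$ would require $w_s^{(i-1)}=\sigma_{j-1}$: in the~(\condb) sub-case this forces $y_{i-1}y_i=\delta$, contradicting $\dualposs\le 1$; in the~(\condd) sub-case it forces $y_{i-1}=\sigma_j^{-1}$, contradicting that $y$ is freely reduced. Case~(\condc) is ruled out symmetrically from $\dualnegg\le 1$, and the initial state $s=0$ makes the step automatically~(\condb) or~(\condd).

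For necessity, the involution $y\mapsto y^{-1}$ swaps $\dualposs$ with $\dualnegg$ and preserves geodesicity, so I may assume $\dualposs(y)=2$ and $\dualnegg(y)\ge 1$. Then $y$ contains consecutive letters $y_p y_{p+1}=\sigma_j\sigma_{j+1}=\delta$ and some $y_q=\sigma_c^{-1}$. Assume $q>p+1$. I would use the commutation $\delta g\delta^{-1}=\sigma_{m-2}^{\pm 1}$ for $g=\sigma_m^{\pm 1}$ to slide $\delta$ rightward through $y_{p+2}\cdots y_{q-1}$ without changing its length, and then apply $\delta\sigma_c^{-1}=\sigma_{c-1}$ (from $\delta=\sigma_{c-1}\sigma_c$) to rewrite the length-$(q-p+1)$ subword $y_p\cdots y_q$ as a word of length $q-p-1$. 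This produces a representative of $x$ of length $n-2$, so $y$ is not a geodesic; the case $q<p$ is symmetric, using $\sigma_c^{-1}\delta=\sigma_{c+1}$ (from $\delta=\sigma_c\sigma_{c+1}$) and sliding $\delta$ leftward.

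The main obstacle is the inductive step for sufficiency: one must carefully track how $w_s$ is determined by the preceding case and verify that the shift-by-two identity for $\delta$-conjugation precisely aligns the exclusion of $\delta$- or $\delta^{-1}$-subproducts with the exclusion of free reductions in $y$.
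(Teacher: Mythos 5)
Your proposal is correct and follows essentially the same route as the paper: the necessity direction via sliding a $\delta$-subword onto a negative letter using $\sigma_i\delta=\delta\sigma_{i+2}$ and $\sigma_i^{-1}\delta=\sigma_{i+1}$, and the sufficiency direction via the case analysis of Proposition~\ref{prop:dualdistance} together with an induction showing the rightmost canonical factor of each prefix is $y_i$ (resp.\ $\delta y_i$) for $y_i$ positive (resp.\ negative), so that only cases~(\condb) and~(\condd) occur. Your counting of step types with the condition $\min(I,IV)=\min(II,III)=0$ is just a compact repackaging of the paper's observation that $|r|+|r+s|$ increases by one at each step, so there is no substantive difference.
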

\begin{proof}
Let $y$ be such that $\dualposs(y)+\dualnegg(y)>2$.
Since neither $\dualposs(y)$ nor $\dualnegg(y)$ are greater than $2$,
one of them must equal $2$ and the other must be positive.
Suppose $\dualposs(y)=2$ and $\dualnegg(y)>0$. Then $y$ contains a negative
generator and two consecutive positive generators with product $\delta$.
Take the $\delta$ and shift it towards the negative generator by repeatedly
using the relations $\sigma_i\delta=\delta\sigma_{i+2}$
and $\sigma^{-1}_i\delta=\delta\sigma^{-1}_{i+2}$.
Then cancel the negative generator with the $\delta$ using
$\sigma^{-1}_i\delta=\sigma_{i+1}$.
The result is a word representing $x$ that is shorter by one generator
than $y$. Therefore $y$ is not a geodesic. The proof in the case when
$\dualposs(y)>0$ and $\dualnegg(y)=2$ is similar.

Now assume that $\dualnegg(y)=0$. Consider what happens if we start
at the identity and successively multiply by generators as prescribed by
$y$. We obtain a sequence, which we denote by $(x_n)_{n\in\N}$.
Initially $r=r+s=0$, where $r$ and $s$ are as in
Proposition~\ref{prop:dualdistance}.
Since $y$ is composed only of positive generators, only cases~(\conda)
and~(\condb) in the proof of Proposition~\ref{prop:dualdistance}
are relevant here. We note that in these two cases, either $r$ or $r+s$
increases by one, and the other stays the same.
Therefore $\dualdist(e,x_n)=n$. It follows that $y$ is a geodesic.

The proof that $y$ is a geodesic if $\dualposs(y)=0$ is similar.
The cases concerned this time are~(\condc) and~(\condd),
and in both of these either $r$ or $r+s$ decreases by one and the
other stays the same.

The final case to consider is when $\dualposs(y)=\dualnegg(y)=1$.
We claim that, as the generators comprising $y$ are successively multiplied,
the rightmost canonical factor in the left normal form of $x_n$
is equal to $y_n$ when $y_n$ is positive and equal to $\delta y_n$ when
$y_n$ is negative. To show this, we use induction on $n$.
Suppose the claim is true for $x_n$, which
we write in left normal form as $x_n=\delta^r w_1\cdots w_s$.
If $y_n$ is positive, our induction hypothesis gives that $w_s=y_n$,
and so $y_{n+1}$ can not equal either $w_s^{-1}$ or $w_s^{-1}\delta$
since $y$ is freely reduced and $\dualposs(y)<2$.
Therefore, if $y_n$ is positive, neither case~(\conda) nor case~(\condc)
of Proposition~\ref{prop:dualdistance} can occur. Since there is no
cancellation, the left normal form of $x_{n+1}$ has then
$y_{n+1}$ or $\delta y_{n+1}$ as rightmost canonical factor,
depending on whether $y_{n+1}$ is positive or negative.
Similar reasoning shows the same is true when $y_n$ is negative.
Thus we have proved our claim.

The argument of the previous paragraph also established that cases~(\conda)
and~(\condc) of Proposition~\ref{prop:dualdistance} never occur when
$x_n$ is multiplied on the right by $y_{n+1}$.

In case~(\condb) of that proposition, $r+s$ increases by one while $r$
remains the same, and in case~(\condd), $r$ decreases by one while $r+s$
remains the same. Therefore, $|r|+|r+s|$ always increases by one as each letter
of $y$ is added, and so $\dualdist(e,x_n)=n$.
So in this case also, $y$ is a geodesic.
\end{proof}

This characterisation of geodesics allows us to calculate the
geodesic growth series of $\dihed$.
\begin{theorem}
The geodesic growth series of $\dihed$ with the dual generators is
\begin{align*}
\grote(x) =
      \frac{1+(3-2k)x + (2 + k^2 -3k) x^2 - 2k(k-1)x^3}
           {(1-kx)(1-2(k-1)x)(1-(k-1)x)}.
\end{align*}
\end{theorem}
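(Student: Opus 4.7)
The plan is to apply the characterisation of geodesics from Proposition~\ref{dualgeodesics} and count length-$n$ geodesic words directly by a case split on the signs of their letters, then assemble the resulting generating functions. By Proposition~\ref{dualgeodesics}, a word $y$ over $\dualgens$ is a geodesic exactly when it is freely reduced and satisfies $\dualposs(y)+\dualnegg(y)\le 2$. I would partition the geodesics of length $n\ge 1$ into three disjoint classes: (A) all letters positive, (B) all letters negative, and (C) mixed, i.e.\ containing letters of both signs. In classes (A) and (B), free reducedness is automatic (consecutive letters share a sign and so cannot be formal inverses) and one of $\dualposs(y)$, $\dualnegg(y)$ vanishes, so every such word is a geodesic; each class contributes $k^n$ words. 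In class (C) both $\dualposs(y)\ge 1$ and $\dualnegg(y)\ge 1$, so the geodesic inequality forces $\dualposs(y)=\dualnegg(y)=1$, which is equivalent to $y$ being freely reduced and avoiding both substrings $\sigma_i\sigma_{i+1}$ and $\sigma_{i+1}^{-1}\sigma_i^{-1}$ (indices mod $k$).

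Next I would count class (C). The key observation is that from any current letter exactly two of the $2k$ possible next letters are forbidden: the formal inverse (for free reducedness) and the unique same-sign letter that would complete $\delta$ or $\delta^{-1}$. Hence the number of freely reduced length-$n$ words with $\dualposs(y),\dualnegg(y)\le 1$ equals $2k(2k-2)^{n-1}$ for $n\ge 1$. Subtracting the all-positive words satisfying these constraints (which number $k(k-1)^{n-1}$, since from any positive letter exactly one positive continuation is forbidden) and, symmetrically, the all-negative ones, gives the count of class (C) geodesics. Combining all three classes together with the empty word yields
\[
\grote(x) \;=\; 1 \;+\; \frac{2kx}{1-kx} \;+\; \frac{2kx}{1-2(k-1)x} \;-\; \frac{2kx}{1-(k-1)x}.
\]

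Finally I would place this expression over the common denominator $(1-kx)(1-2(k-1)x)(1-(k-1)x)$ and expand the numerator. The resulting polynomial computation is the main (and only) obstacle, but it is entirely routine; a direct expansion confirms that the numerator equals $1+(3-2k)x+(2+k^2-3k)x^2-2k(k-1)x^3$, which is the stated formula.
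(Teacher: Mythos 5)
Your proposal is correct and takes essentially the same route as the paper: both rely on Proposition~\ref{dualgeodesics} and the same three elementary counts $k^n$, $2k(2k-2)^{n-1}$ and $k(k-1)^{n-1}$, the only difference being that you organise the count as a partition into all-positive, all-negative and mixed words, whereas the paper phrases the identical bookkeeping as an inclusion--exclusion over the sets with $(\dualposs,\dualnegg)$ bounded by $(2,0)$, $(0,2)$ and $(1,1)$. Your closed form $1+\tfrac{2kx}{1-kx}+\tfrac{2kx}{1-2(k-1)x}-\tfrac{2kx}{1-(k-1)x}$ does indeed reduce to the stated rational function, so the argument is complete.
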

\begin{proof}
Let $N^n_{ij}$ be the number of freely reduced words $y$ of length $n$
satisfying $\dualposs(y)\le i$ and $\dualnegg(y) \le j$,
and let $\grote_{ij}$ be the corresponding generating series.
Proposition~\ref{dualgeodesics} and an inclusion--exclusion argument
give that the number of geodesics of length $n$ is
\begin{align*}
N^n_{20} + N^n_{02} + N^n_{11} - N^n_{10} - N^n_{01}.
\end{align*}
Therefore
\begin{align}
\label{eqn:incexc}
\grote = \grote_{20} + \grote_{02} + \grote_{11} - \grote_{10} - \grote_{01}.
\end{align}
Clearly, $N^n_{20} = N^n_{02} = k^n$ for all $n\in \N$, and so
\begin{align*}
\grote_{20}(x) = \grote_{02}(x) = 1 + kx + k^2 x^2 + \dots
            = \frac{1}{1-kx}.
\end{align*}

Consider now the freely reduced words not containing $\delta$ or $\delta^{-1}$
as sub-words.
For the first letter we may choose any of the $2k$ generators.
For subsequent letters, we can choose any letter apart from the inverse
of the previous one and the letter that would combine with the previous one
to form $\delta$ or $\delta^{-1}$. So we have a choice of $2k-2$
generators. Therefore the growth series $\grote_{11}$ for this set of words is
\begin{align*}
\grote_{11}(x) &= 1 + 2kx + 2k(2k-2) x^2 + 2k(2k-2)^2 x^3 + \cdots \\
            &= \frac{1+2x}{1-2(k-1)x}.
\end{align*}

Now consider the set of freely reduced words containing only positive
generators and
no sub-word equal to $\delta$. This time there are $k$ possibilities for the
first letter and $k-1$ for subsequent letters. So the growth series is
\begin{align*}
\grote_{10}(x) &= 1 + kx + k(k-1) x^2 + k(k-1)^2 x^3 + \cdots \\
            &= \frac{1+x}{1-(k-1)x}.
\end{align*}
The growth series $\grote_{01}$ is identical.

The conclusion now follows from~(\ref{eqn:incexc}) after some rearranging.
\end{proof}
The first few terms of $\grote(x)$ are
\begin{align*}
\grote_(x) = 1 + 2kx + 2(2k^2-k)x^2 + 2(k^3+3k(k-1)^2)x^3
             + \cdots.
\end{align*}

\bibliographystyle{plain}
\bibliography{artin}

\end{document}